\documentclass[12pt,reqno]{amsart}
\usepackage[english]{babel}
\usepackage{amsmath,amsfonts,amssymb,amsthm,amscd,latexsym}
\usepackage[T1]{fontenc}
\usepackage[utf8]{inputenc}

\usepackage{tikz}
\usetikzlibrary{arrows}
\usepackage{color}
\usepackage{cite}
\usepackage{multirow}
\usepackage{cite}
\usepackage{xcolor}
\newtheorem{theorem}{Theorem}[section]

\newtheorem{lemma}[theorem]{Lemma}

\newtheorem{definition}[theorem]{Definition}

\numberwithin{equation}{section}
\DeclareFontFamily{OT1}{pzc}{}
\DeclareFontShape{OT1}{pzc}{m}{it}{<-> s * [1.10] pzcmi7t}{}
\DeclareMathAlphabet{\mathpzc}{OT1}{pzc}{m}{it}

\begin{document}

 \bigskip
	
	\begin{center}
		\textbf{Local and 2-local $\frac{1}2$-derivations of  infinite-dimensional Lie algebras } \\
		\medskip
		\textbf{Shavkat Ayupov$^{1,2}$, Abdireymov Arislanbay$^{1}$, Bakhtiyor Yusupov$^{1,3}$} \\
		\smallskip
		{$^{1}$V.I.Romanovskiy Institute of Mathematics Uzbekistan Academy of Sciences, Tashkent, Uzbekistan}\\
        		  {$^{2}$ National University of Uzbekistan, Tashkent, Uzbekistan}\\
                   {$^{3}$ Department of Algebra and Mathematical engineering, Urgench State University, Urgench, Uzbekistan}\\
    {shavkat.ayupov@mathinst.uz, \ aaabdireymov1001@gmail.com,\  baxtiyor\_yusupov\_93@mail.ru}\\
	\end{center}

	\bigskip

\noindent {\bf Abstract.}
{\it In this work, we describe local and 2-local $\frac12$-derivations of infinite-dimensional Lie algebras. We prove that all local and 2-local $\frac12$-derivations of the Witt
algebra as well as of the positive Witt algebra and the classical one-sided Witt algebra are $\frac12$-derivations. We also give an example of an infinite-dimensional Lie algebra with a local (2-local) $\frac12$-derivation which is not a $\frac12$-derivation. Further we prove
that all local (2-local) $\frac12$-derivations on the $\mathcal{W}(a,b)$ algebra  are $\frac12$-derivations.
}

\bigskip

\noindent {\bf Keywords}:
{\it $\frac12$-derivation, local $\frac12$-derivation, 2-local $\frac12$-derivation, Witt algebras, thin algebras,  positive Witt algebra, $\mathcal{W}(a,b)$ algebra.}

\noindent {\bf MSC2020}: 17A32, 17B30, 17B10.

 \bigskip

\bigskip

\section{Introduction}

In recent years non-associative analogues of classical constructions become of
interest in connection with their applications in many branches of mathematics and
physics. The notions of local and $2$-local derivations are also become popular for some non-associative algebras such as the Lie and Leibniz algebras.

The notions of local derivations were introduced in 1990 by R.V.Kadison \cite{Kadison} and D.R.Larson, A.R.Sourour \cite{Larson}. Later in 1997, P. $\check{S}$emrl introduced the notions of  $2$-local derivations and
$2$-local automorphisms on algebras \cite{Sem}. The main problems concerning these notions are to find conditions under which all local ($2$-local) derivations become (global)derivations and to present examples of algebras with local ($2$-local) derivations that
are not derivations.

Investigation of local and 2-local derivations on Lie algebras was initiated in papers \cite{Ayupov1} and \cite{Ayupov2}. Sh.A. Ayupov and K.K. Kudaybergenov have proved that every local(2-local) derivation on a semi-simple Lie algebra is a derivation and gave examples of nilpotent finite-dimensional Lie algebras with 2-local derivations which are not derivations. Let us present a list of infinite dimensional  algebras for which all 2-local derivations are derivations:  infinite dimensional Witt  algebras over an algebraically closed field of characteristic zero \cite{AKY};  locally finite split simple Lie algebras over a field of characteristic zero \cite{AKYu1}; Virasoro algebras \cite{AY2}; $W(2,2)$ Lie algebras \cite{Tang}; Virasoro-like algebra \cite{XMP}; the Schrodinger-Virasoro algebra \cite{QX}; Jacobson-Witt algebras \cite{YK};  deformative super $W$-algebras $W_{\lambda}^s(2,2)$ \cite{Abd}; planar Galilean conformal algebra  \cite{ChenXe}.

The investigation of local and $2$-local $\delta$-derivations on Lie algebras was initiated by A.~Khudoyberdiyev and B.~Yusupov in \cite{KhudYus}. In that paper, the notions of local and $2$-local $\delta$-derivations were introduced and local and $2$-local $\tfrac12$-derivations were described for finite-dimensional solvable Lie algebras with filiform, Heisenberg, and abelian nilradicals. Moreover, local $\tfrac12$-derivations were described for oscillator Lie algebras, conformal perfect Lie algebras, and Schr\"{o}dinger algebras. A related problem for Leibniz algebras was considered in \cite{Yusupov21} by U.~Mamadaliyev, A.~Sattarov, and B.~Yusupov. They proved that any local $\tfrac12$-derivation on solvable Leibniz algebras with model or abelian nilradicals and with maximal complementary space dimension is a $\tfrac12$-derivation. They also obtained results for solvable Leibniz algebras with abelian nilradicals and one-dimensional complementary space, and studied the corresponding $2$-local case.  Similar results for local $\tfrac12$-derivations of naturally graded quasi-filiform Leibniz algebras of type~I were obtained by B.~Yusupov, N.~Vaisova, and T.~Madrakhimov in \cite{YusVai}; in particular, they showed that such algebras as a rule admit local $\tfrac12$-derivations which are not $\tfrac12$-derivations. In \cite{Yus1,Yus2}, local and $2$-local $\tfrac12$-derivations on $p$-filiform Leibniz algebras were studied, and it was shown that, in general, $p$-filiform Leibniz algebras admit local (and $2$-local) $\tfrac12$-derivations which are not $\tfrac12$-derivations.

In this work, we investigate local and 2-local $\delta$-derivations of Lie algebras. After Preliminaries (Section 2), in the third section, we describe $\frac12$-derivation of the positive and one-sided Witt algebras. In the fourth section, we prove that all local $\frac12$-derivations of the Witt algebra, as well as of the positive Witt algebra and the classical one-sided Witt algebra and also of the Lie algebra $\mathcal{W}(a,b)$
are $\frac12$ derivations. We also give an example of an infinite-dimensional Lie algebra with a local $\frac12$-derivation which is not a $\frac12$-derivation.
In the fifth section, we prove similar results for 2-local $\frac12$-derivations on the same  algebras listed in previous section. We also provide an example of an infinite-dimensional Lie algebra with a 2-local $\frac{1}{2}$-derivation that is not a $\frac{1}{2}$-derivation.
In the sixth section, we prove that all 2-local $\frac12$-derivations of the solvable infinite-dimensional Lie algebra with an abelian radical of codimension $1$ are $\frac12$-derivations.

\section{Preliminaries}

\medskip

In this section, we give some necessary definitions and preliminary
results.

 Omitted products in the multiplication table of an algebra are assumed to be zero. Moreover, due to the anti-commutativity of Lie algebras, symmetric products for these algebras are also omitted.

\begin{definition}\label{12der}
		Let $({\mathfrak L}, [-,-])$ be a Lie algebra over a field $\mathbb{F}$ with a multiplication $[-,-].$ A linear map $\varphi$ is called a $\delta$-derivation if it satisfies
\begin{center}
$\varphi[x,y]= \delta \big([\varphi(x),y]+ [x, \varphi(y)] \big),$
\end{center}
where $\delta$ is fixed and it is from the ground field $\mathbb{F}.$
\end{definition}

Note that $1$-derivation is a usual derivation and $(-1)$-derivation is called an anti-derivation.
If $\varphi_1$ and $\varphi_2$ are $\delta_1$ and $\delta_2$-derivations, respectively, then their commutator $[\varphi_1, \varphi_2] = \varphi_1\varphi_2 - \varphi_2\varphi_1$ is a $\delta_1\delta_2$-derivation.  The set of all $\delta$-derivations, for the fixed $\delta$, we denote by $\operatorname{Der}_{\delta}(\mathfrak L).$

Since the notions of local operators can be defined for any type of operator, the notions of local and $2$-local $\delta$-derivations are defined as follows:

\begin{definition}
A linear map $\Delta$ is called a local $\delta$-derivation, if for any $x \in \mathfrak L,$ there exists a $\delta$-derivation $\varphi_x: \mathfrak L \rightarrow \mathfrak L$ (depending on $x$) such that $\Delta(x) = \varphi_x(x).$ The set of all local $\delta$-derivations on $\mathfrak L$ we denote by $\mathrm{Loc}\mathrm{Der}_{\delta}(\mathfrak L).$
\end{definition}

\begin{definition}
A map $\nabla :  \mathfrak L \rightarrow \mathfrak L$ (not
necessary  linear) is called a $2$-local $\delta$-derivation, if for any $x,y\in \mathfrak L$, there exists a $\delta$-derivation $\varphi_{x,y}\in \mathrm{Der}_{\delta}
(\mathfrak L)$ such that
{\small\[
\nabla(x)=\varphi_{x,y}(x), \quad \nabla(y)=\varphi_{x,y}(y).
\]}
\end{definition}

It should be noted that $2$-local $\delta$-derivation is not necessarily linear, but for any $x \in \mathfrak L$ and for any scalar $\lambda,$ we have
$$\nabla(\lambda x) = \varphi_{x, \lambda x}(\lambda x) = \lambda  \varphi_{x, \lambda x}(x) =
\lambda \nabla(x).$$

In this work, we focus on investigating local and $2$-local $\frac 1 2$-derivations. Since any $\frac 1 2$-derivation is a local and $2$-local $\frac 1 2$-derivations, we are interesting
on local and $2$-local $\frac 1 2$-derivation, which is not a $\frac 1 2$-derivation. Such local (resp. $2$-local) $\frac 1 2$-derivations we call non-trivial local (resp. $2$-local) $\frac 1 2$-derivations.

We now give definitions of some algebras studied in this paper.

Let $A=\mathbb{C}[x,x^{-1}]$ be the algebra of all Laurent polynomials in one variable over a field of
characteristic zero $\mathbb{F}.$ The Lie algebra of derivations
$$
Der(A)=span\left\{f(x)\frac{d}{dx}:f\in\mathbb{C}\left[x,x^{-1}\right]\right\}
$$
with the Lie bracket is called a \emph{Witt algebra} and denoted by \(\mathcal{W}\).
Then \cite{Kac} $\mathcal{W}$ is   an  infinite-dimensional simple algebra  which  has the basis $\left\{e_i: e_i=x^{i+1}\frac{d}{dx}, i\in \mathbb{Z}\right\}$ and the
multiplication rule
$$
[e_i, e_j]=(j-i)e_{i+j},\, i,j \in \mathbb{Z}.
$$
We also consider  the infinite-dimensional  positive part \(\mathcal{W}^+\) of the Witt algebra. The \emph{positive Witt algebra} \(\mathcal{W}^+\) is an infinite-dimensional
Lie algebra \cite{Dim} which has the basis
 $\left\{e_i: e_i=x^{i+1}\frac{d}{dx}, i\in \mathbb{N}\right\}$ and the
multiplication rule
$$
[e_i, e_j]=(j-i)e_{i+j},\, i,j \in \mathbb{N}.
$$

The classical \emph{one-sided Witt algebra} $\mathcal{W}_1$ is defined with basis $\{e_i|\ i\in\mathbb{Z},\ i\geq -1\}$ and brackets
$$
[e_i, e_j]=(j-i)e_{i+j},\, \ \text{for all}\ \ i,j\geq -1,\  i,j\in \mathbb{Z}.
$$
Thus by realizing $e_i$ as $x^{i+1}\frac{d}{dx},$ one immediately observes that $\mathcal{W}_1=Der\mathbb{C}[x]=\mathbb{C}[x]\frac{d}{dx}$ is the derivation algebra of the polynomial algebra $\mathbb{C}[x].$

In \cite{Kac}, a class of representations $I(a,b)=\oplus_{i\in\mathbb{Z}}\mathbb{C}f_i$ for the Witt algebra $\mathcal{W}$ with two complex parameters $a$ and $b$ had been introduced. The action of $\mathcal{W}$ on $I(a,b)$ is given by
$e_i\cdot f_j= -(i+a+bj)f_{i+j}.$ $I(a,b)$ is the so called tensor density module. The Lie algebra $\mathcal{W}(a,b) = \mathcal{W}\ltimes I(a,b),$ where $\mathcal{W}$ is the Witt algebra and $I(a,b)$ is the tensor density module of $W.$

The Lie algebra $\mathcal{W}(a,b)$ is spanned by generators $\{ e_i, f_j\}_{i,j\in\mathbb{Z}}.$ These generators satisfy
$$
[e_i,e_j]=(i-j)e_{i+j},\quad [e_i,f_j]=-(i+a+bj)f_{i+j}.
$$

\section{$\frac12$-Derivations of the Witt algebras}

In this section, we study $\frac12$-derivations of the Witt, positive and one-sided Witt algebras.

In \cite{BKL}, the general structure of $\frac12$-derivations on the Witt algebra is determined.

\begin{theorem}\label{BKL}
Let $\varphi$ be $\frac{1}{2}$-derivation of the Witt algebra $\mathcal{W}$. Then there is a set $\{\alpha_i\}_{i\in\mathbb{Z}}$ of
elements from the basic field, such that $\varphi(e_i)=\sum\limits_{j\in\mathbb{Z}}\alpha_je_{i+j}.$
\end{theorem}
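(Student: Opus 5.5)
We prove the statement directly from the defining identity
\[
\varphi([x,y])=\tfrac12\big([\varphi(x),y]+[x,\varphi(y)]\big),
\]
using the grading of $\mathcal{W}$ by $\operatorname{ad} e_0$. The plan is to decompose $\varphi$ into homogeneous components and show that each component is multiplication by a scalar, with the scalar independent of $i$.

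\textbf{Step 1: homogeneous components.} For $k\in\mathbb{Z}$, set
\[
\varphi(e_i)=\sum_k \varphi_k(e_i),\qquad \varphi_k(e_i)=\beta_{i,k}\,e_{i+k}.
\]
For each $i$ only finitely many $\beta_{i,k}$ are nonzero. The identity is homogeneous with respect to the grading, so every $\varphi_k$ is itself a $\frac12$-derivation. It therefore suffices to show that $\beta_{i,k}=\alpha_k$ does not depend on $i$. Then $\varphi(e_i)=\sum_k\alpha_k e_{i+k}$, where the sum is finite for each $i$.

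\textbf{Step 2: the component of degree $k$.} Apply the identity to the pair $(e_0,e_i)$. Since $[e_0,e_i]=ie_i$, we get
\[
i\,\beta_{i,k}=\tfrac12\big((i-k)\beta_{0,k}+(i+k)\beta_{i,k}\big).
\]
Hence
\[
(i-k)\beta_{i,k}=(i-k)\beta_{0,k}.
\]
So $\beta_{i,k}=\beta_{0,k}=:\alpha_k$ for every $i\neq k$. The only index left undetermined is $i=k$, together with the possibility that $k=0$ carries extra freedom.

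\textbf{Step 3: the exceptional index $i=k$, $k\neq0$.} Apply the identity to $(e_{-1},e_{k+1})$, or to $(e_1,e_{k-1})$. Note that $[e_1,e_{k-1}]=(k-2)e_k$, and both $1$ and $k-1$ differ from $k$, so $\beta_{1,k}=\beta_{k-1,k}=\alpha_k$. The identity then gives
\[
(k-2)\beta_{k,k}=\tfrac12\big(\alpha_k(k-2-k)+\alpha_k(k-2+k)\big)=\alpha_k(k-2).
\]
This settles every $k\neq2$. For $k=2$, use the pair $(e_{-1},e_3)$ instead. Since $[e_{-1},e_3]=4e_2$ and the indices $-1,3$ are both $\neq 2$, the same computation gives $\beta_{2,2}=\alpha_2$. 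In general, for each $k$ we choose a pair $a+b=k$ with $a,b\neq k$ and $b-a\neq0$; in $\mathcal{W}$ such a pair is always available.

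\textbf{Step 4: $k=0$.} Here Step 2 is vacuous, because $0\cdot\beta_{i,0}=0$. We use $(e_1,e_{i-1})$ instead:
\[
(i-2)\beta_{i,0}=\tfrac12(i-2)\big(\beta_{1,0}+\beta_{i-1,0}\big).
\]
Also $(e_{-1},e_1)$ gives $\beta_{0,0}=\tfrac12(\beta_{-1,0}+\beta_{1,0})$. Recursions of this kind, including the analogous one with $(e_2,e_{i-2})$, force $\beta_{i,0}$ to be an affine function $c+di$. Substituting this into the general relation from $(e_a,e_b)$,
\[
\beta_{a+b}=\tfrac12(\beta_a+\beta_b)\quad\text{whenever }a\neq b,
\]
and comparing $c+d(a+b)$ with $c+\tfrac d2(a+b)$ gives $d=0$. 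So $\beta_{i,0}$ is constant.

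\textbf{Main obstacle.} The delicate part is the bookkeeping of the exceptional indices: $i=k$, the case $k=2$, and the degree-zero component, where the $e_0$ equation gives no information. I expect the careful choice of auxiliary pairs $(e_a,e_b)$ in Steps 3 and 4 to be the main work. The reduction to homogeneous components in Step 1, which also handles finiteness, is straightforward.
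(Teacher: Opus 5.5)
Your argument is correct and is essentially the method the paper uses for the one-sided Witt algebra in Theorem~\ref{onewitt}; for $\mathcal{W}$ itself the paper gives no proof and only cites \cite{BKL}. The pair $(e_0,e_i)$ gives $\beta_{i,k}=\beta_{0,k}$ away from the diagonal $i=k$ (the paper's $j\neq 2i$), and a second bracket $(e_a,e_b)$ with $a+b=k$, $a,b\neq k$, $a\neq b$ forces $\beta_{k,k}=\alpha_k$.

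Two small corrections:
\begin{itemize}
\item Step~2 is not vacuous for $k=0$. There it reads $i\beta_{i,0}=i\beta_{0,0}$, which already gives $\beta_{i,0}=\beta_{0,0}$ for every $i\neq 0$. So Step~4, with its unproved ``affine'' claim, should simply be dropped.
\item For $k=1$ the pair $(e_1,e_{k-1})=(e_1,e_0)$ gives a vacuous identity, because $1=k$. There you need a pair such as $(e_{-1},e_2)$, which your general rule in Step~3 supplies.
\end{itemize}
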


Now we give a classification of derivations of $\mathcal{W}_1$  one-sided Witt algebras.
\begin{theorem}\label{onewitt}
    Any $\frac{1}{2}$-derivation on $\mathcal{W}_1$ has the following form
\begin{equation*}
\varphi(e_i)=\sum\limits_{j\ge 0}\alpha_{j} e_{j+i}, \quad i\geq -1.
\end{equation*}
Where set $\{\alpha_i\}_{i\in\mathbb{N}}$ of
elements from the basic field.
\end{theorem}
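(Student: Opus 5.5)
Fix a $\frac12$-derivation $\varphi$ of $\mathcal{W}_1$ and write $\varphi(e_i)=\sum_{k\ge -1}c_{i,k}e_k$, where each sum is finite. Throughout we use the bracket $[e_i,e_j]=(j-i)e_{i+j}$ and the defining identity
$$
2\varphi([e_i,e_j])=[\varphi(e_i),e_j]+[e_i,\varphi(e_j)].
$$

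\textbf{Step 1: $\varphi(e_0)$.} Since $[e_0,e_0]=0$ gives nothing, we take $i=0$ and use $[e_0,e_j]=je_j$:
$$
2j\,\varphi(e_j)=[\varphi(e_0),e_j]+[e_0,\varphi(e_j)].
$$
Comparing the coefficient of $e_m$ on both sides yields
$$
2j\,c_{j,m}=(j-m+j)\,c_{0,m-j}+m\,c_{j,m},
\qquad\text{i.e.}\qquad
(2j-m)\,c_{j,m}=(2j-m)\,c_{0,m-j}.
$$
So $c_{j,m}=c_{0,m-j}$ whenever $m\neq 2j$. Setting $\alpha_k:=c_{0,k}$ for $k\ge -1$, we get $\varphi(e_j)=\sum_k \alpha_k e_{j+k}$ up to the single coefficient of $e_{2j}$. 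For $j=0$ the exceptional index is $m=0$, and there the relation holds trivially with $\alpha_0=c_{0,0}$. We still have to show $\alpha_{-1}=0$ and to control the diagonal coefficients $c_{j,2j}$ for $j\ne 0$.

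\textbf{Step 2: $\alpha_{-1}=0$ and the diagonal coefficients.} To fix $c_{j,2j}$ we use other brackets, for instance $[e_{-1},e_j]=(j+1)e_{j-1}$ and $[e_1,e_j]=(j-1)e_{j+1}$. The first step is to treat $e_{-1}$ and $e_1$ themselves. From $[e_{-1},e_1]=2e_0$ we get $4\varphi(e_0)=[\varphi(e_{-1}),e_1]+[e_{-1},\varphi(e_1)]$. Here $\varphi(e_{\pm1})$ are known except for the coefficients $d_{-1}:=c_{-1,-2}$ and $d_1:=c_{1,2}$. The index $-2$ does not exist in $\mathcal{W}_1$, so a would-be term $\alpha_{-1}e_{-2}$ in $\varphi(e_{-1})$ is simply absent; this asymmetry is what should force $\alpha_{-1}=0$. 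We then compare low coefficients in this relation, and in $[e_{-1},e_0]$ and $[e_{-1},e_2]$, to get $\alpha_{-1}=0$ and $c_{1,2}=\alpha_1$.

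Next we argue by induction on $j$, using $[e_1,e_{j}]=(j-1)e_{j+1}$ for $j\ge 2$ together with the $e_{-1}$ relations. Take the coefficient of $e_{2j+2}$ in
$$
2(j-1)\varphi(e_{j+1})=[\varphi(e_1),e_j]+[e_1,\varphi(e_j)].
$$
The right side involves only already determined coefficients and $c_{j,2j+1}$, so it fixes $c_{j+1,2j+2}=\alpha_{j+1}$. The cases $j=2,3$ need separate checks because $j-1$ and the relevant structure constants may vanish there (e.g. $[e_1,e_1]=0$); for these we use $[e_{-1},e_3]$ and $[e_{-1},e_4]$ instead.

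\textbf{Step 3: converse.} We check that every map of the stated form is a $\frac12$-derivation. This is the computation in Theorem \ref{BKL}: $[e_{i+k},e_j]+[e_i,e_{j+k}]=(j-i-k+j+k-i)e_{i+j+k}=2(j-i)e_{i+j+k}$, and summing over $k$ with weights $\alpha_k$ gives the identity. Since $k\ge0$, all indices $i+k\ge -1$ stay inside $\mathcal{W}_1$. For finiteness we take $\{\alpha_k\}$ finitely supported, or else interpret the sums formally.

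\textbf{Main obstacle.} The delicate part is the diagonal coefficients $c_{j,2j}$ and the vanishing of $\alpha_{-1}$. The equations from $e_0$ leave these undetermined, so the argument depends on careful low-index bookkeeping with $e_{-1}$ and $e_1$, where the missing basis element $e_{-2}$ plays the decisive role.
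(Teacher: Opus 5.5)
Your proposal is correct. Step 1 is exactly the paper's computation with the pair $(e_0,e_i)$ (Lemma \ref{lem:coeff}); the two routes split after that.

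\textbf{How the routes differ.} The paper asserts $\alpha_{0,-1}=0$ from the $i=-1$ case of that same relation. It then subtracts $D=\sum_{k\ge0}\alpha_{0,k}T_k$ and kills all diagonal remainders $\beta_i e_{2i}$ at once with the pairs $(e_{-1},e_{i+1})$ (Lemma \ref{lemphi}). You instead work with raw coefficients:
\begin{itemize}
\item the pair $(e_{-1},e_1)$ gives $\alpha_{-1}=0$ and $c_{1,2}=\alpha_1$;
\item the pairs $(e_1,e_j)$, $j\ge2$, give the higher diagonal entries;
\item a low case is patched with $(e_{-1},e_3)$.
\end{itemize}

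\textbf{Why your handling of $\alpha_{-1}$ is the sound one.} In the $i=-1$ case of the $e_0$-relation, the $e_{-2}$-coefficient is $0\cdot\alpha_{0,-1}$, so it imposes no condition at all. By contrast, in $4\varphi(e_0)=[\varphi(e_{-1}),e_1]+[e_{-1},\varphi(e_1)]$:
\begin{itemize}
\item the $e_{-1}$-coefficient gives $4\alpha_{-1}=\alpha_{-1}$. The only contribution on the right is $\alpha_{-1}[e_{-1},e_0]$. In $\mathcal W$, the term $\alpha_{-1}e_{-2}$ of $\varphi(e_{-1})$ would supply the other $3\alpha_{-1}$, but in $\mathcal W_1$ it is absent — exactly the asymmetry you identified.
\item the $e_1$-coefficient gives $4\alpha_1=\alpha_1+3c_{1,2}$.
\end{itemize}
So your route never presupposes $\alpha_{-1}=0$. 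The paper's subtraction of $D$ buys a short uniform finish, but only once $\alpha_{-1}=0$ is genuinely known. You should write these two coefficient comparisons out rather than say ``compare low coefficients''; the relations $[e_{-1},e_0]$ and $[e_{-1},e_2]$ are not needed.

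\textbf{Points to tighten.}
\begin{itemize}
\item There is no coefficient $c_{-1,-2}$. For $j=-1$ the exceptional index $2j=-2$ lies outside the index set, so Step 1 already determines $\varphi(e_{-1})=\sum_{k\ge0}\alpha_k e_{k-1}$ completely.
\item No induction is needed in the $(e_1,e_j)$ step. The $e_{2j+2}$-coefficient of the right side is $-2c_{1,j+2}+2j\,c_{j,2j+1}=2(j-1)\alpha_{j+1}$. This involves only off-diagonal entries already fixed in Step 1, so each $j\ge2$ yields $c_{j+1,2j+2}=\alpha_{j+1}$ directly.
\item Consequently the only exceptional entry besides $c_{1,2}$ is $c_{2,4}$ (it would need $j=1$, where $[e_1,e_1]=0$). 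The pair $(e_{-1},e_3)$ settles it via $8c_{2,4}=2\alpha_2+6\alpha_2$. Your extra check with $(e_{-1},e_4)$ is redundant.
\item More economically, the single family $(e_{-1},e_{j+1})$, $j\ge1$, does everything. Read at $e_{j-1}$ it gives $(j+3)\alpha_{-1}=0$; read at $e_{2j}$ it gives $2(j+2)c_{j,2j}=(2j+4)\alpha_j$. This is the paper's Lemma \ref{lemphi} carried out before subtracting $D$.
\item Step 3 is not part of the statement, which only asserts the form of $\varphi$. If you keep it, the relevant computation is Lemma \ref{lemtk}, not Theorem \ref{BKL}. Finiteness of $\{\alpha_k\}$ is automatic, since these are the coefficients of $\varphi(e_0)$.
\end{itemize}
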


\begin{proof}
The proof consists of the following three main lemmas.

For $k \geq 0$, define $T_k: \mathcal{W}_1 \to \mathcal{W}_1$ by $T_k(e_i) = e_{i+k}$ for all $i \geq -1$.

\begin{lemma}\label{lemtk}
Each $T_k$ is a $\frac{1}{2}$-derivation of $\mathcal{W}_1$.
\end{lemma}

\begin{proof}
For basis elements $e_i, e_j$ with $i, j \geq -1$,
\begin{align*}
[T_k(e_i), e_j] + [e_i, T_k(e_j)]
&= [e_{i+k}, e_j] + [e_i, e_{j+k}] \\
&= (j - (i+k))e_{i+j+k} + ((j+k) - i)e_{i+j+k} \\
&= 2(j-i)e_{(i+j)+k} \\
&= 2T_k([e_i, e_j]).
\end{align*}
By linearity, this holds for all $x, y \in \mathcal{W}_1$.
\end{proof}

Since linear combinations of $\frac{1}{2}$-derivations are again $\frac{1}{2}$-derivations, any operator of the form
\begin{equation}\label{eq:D}
D = \sum_{k \geq 0} a_k T_k, \quad a_k \in \mathbb{C},
\end{equation}
is a $\frac{1}{2}$-derivation.

Let $\varphi$ be an arbitrary $\frac{1}{2}$-derivation. Write
\begin{equation*}
\varphi(e_i) = \sum_{j \geq -1} \alpha_{i,j} e_j, \quad i \geq -1.
\end{equation*}

Applying the $\frac{1}{2}$-derivation condition to the pair $(e_0, e_i)$, we obtain
\begin{equation*}
[\varphi(e_0), e_i] + [e_0, \varphi(e_i)] = 2\varphi([e_0, e_i]) = 2i\varphi(e_i).
\end{equation*}

Computing each term, we have
\begin{align*}
[\varphi(e_0), e_i] &= \sum_{j \geq -1} \alpha_{0,j} [e_j, e_i] = \sum_{j \geq -1} \alpha_{0,j}(i-j)e_{i+j}, \\
[e_0, \varphi(e_i)] &= \sum_{j \geq -1} \alpha_{i,j} [e_0, e_j] = \sum_{j \geq -1} \alpha_{i,j} \cdot j \cdot e_j.
\end{align*}

Therefore,
\begin{equation}\label{eq:main}
\sum_{j \geq -1} \alpha_{0,j}(i-j)e_{i+j} + \sum_{j \geq -1} j\alpha_{i,j}e_j = 2i\sum_{j \geq -1} \alpha_{i,j}e_j.
\end{equation}

Rearranging the second sum
\begin{equation}\label{eq:rearranged}
\sum_{j \geq -1} \alpha_{0,j}(i-j)e_{i+j} = \sum_{j \geq -1} (2i-j)\alpha_{i,j}e_j.
\end{equation}

\begin{lemma}\label{lem:coeff}
We have $\alpha_{0,-1} = 0$ and for all $i \geq -1$,
\begin{equation*}
\varphi(e_i) = \sum_{j \geq 0} \alpha_{0,j} e_{i+j} + \delta_i \beta_i e_{2i},
\end{equation*}
where $\delta_i = 1$ for $i \geq 1$ and $\delta_i = 0$ for $i = -1, 0$, and $\beta_i = \alpha_{i,2i} - \alpha_{0,2i}$.
\end{lemma}

\begin{proof}
Setting $i = -1$ in \eqref{eq:rearranged}, we have
\begin{equation*}
\sum_{j \geq -1} \alpha_{0,j}(-1-j)e_{j-1} = \sum_{j \geq -1} (-2-j)\alpha_{-1,j}e_j.
\end{equation*}

The left side contains $e_{-2}$ (when $j = -1$) with coefficient $0$, but the right side has no $e_{-2}$ term. This forces $\alpha_{0,-1} = 0$.

For $j \neq 2i$, comparing coefficients of $e_j$ in \eqref{eq:rearranged} gives $\alpha_{i,j} = \alpha_{0,j-i}$. For $j = 2i$, we obtain the stated form with $\beta_i = \alpha_{i,2i} - \alpha_{0,2i}$.
\end{proof}

Now define $D$ as in \eqref{eq:D} with $a_k = \alpha_{0,k}$. Then $D(e_0) = \varphi(e_0)$ and $D(e_{-1}) = \varphi(e_{-1})$. Set $\varphi_1 = \varphi - D$. Then $\varphi_1$ is a $\frac{1}{2}$-derivation satisfying $\varphi_1(e_{-1}) = \varphi_1(e_0) = 0$ and
\begin{equation*}
\varphi_1(e_i) = \beta_i e_{2i}, \quad i \geq 1.
\end{equation*}

\begin{lemma}\label{lemphi}
$\varphi_1 = 0$, i.e., $\beta_i = 0$ for all $i \geq 1$.
\end{lemma}

\begin{proof}
Apply the $\frac{1}{2}$-derivation condition to the pair $(e_{-1}, e_{i+1})$ for $i \geq 1$,
\begin{equation*}
2\varphi_1([e_{-1}, e_{i+1}]) = [\varphi_1(e_{-1}), e_{i+1}] + [e_{-1}, \varphi_1(e_{i+1})].
\end{equation*}

Since $[e_{-1}, e_{i+1}] = (i+2)e_i$, $\varphi_1(e_{-1}) = 0$, and $\varphi_1(e_{i+1}) = \beta_{i+1}e_{2i+2}$,
\begin{equation*}
2(i+2)\beta_i e_{2i} = [e_{-1}, \beta_{i+1}e_{2i+2}] = \beta_{i+1}(2i+3)e_{2i+1}.
\end{equation*}

Since $\{e_{2i}, e_{2i+1}\}$ are linearly independent, both sides must vanish
\begin{equation*}
(i+2)\beta_i = 0 \quad \text{and} \quad (2i+3)\beta_{i+1} = 0.
\end{equation*}

For $i \geq 1$, we have $i+2 \neq 0$, hence $\beta_i = 0$.
\end{proof}

By Lemma \ref{lemphi}, $\varphi = D = \sum_{j \geq 0} \alpha_j T_j$ as required.
\end{proof}

We now state the following theorem for the positive Witt algebra.

\begin{theorem}\label{prop:half-der-Wplus-generators}
Any $\frac12$-derivation $\varphi$ of positive Witt algebras $\mathcal{W}^{+}$ has the form
\begin{equation}\label{eq:halfWplus-form-gen}
\varphi(e_i)=\sum_{k\ge i}\alpha_{k-i+1}\,e_{k}\qquad (i\in\mathbb{N}),
\end{equation}
where set $\{\alpha_i\}_{i\in\mathbb{N}}$ of
elements from the basic field.

\end{theorem}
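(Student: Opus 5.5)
The plan is to reduce to homogeneous components, use the shift operators $T_k(e_i)=e_{i+k}$, $k\ge 0$, as in Lemma~\ref{lemtk}, and show that every $\frac12$-derivation of $\mathcal{W}^{+}$ is a (formal) combination of them. The computation in Lemma~\ref{lemtk} only uses $[e_i,e_j]=(j-i)e_{i+j}$, so it holds verbatim on $\mathcal{W}^+$, with indices in $\mathbb{N}$. Since $\sum_{k\ge i}\alpha_{k-i+1}e_k=\sum_{m\ge 0}\alpha_{m+1}e_{i+m}$, the statement says exactly that $\varphi=\sum_{m\ge0}\alpha_{m+1}T_m$.

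Unlike $\mathcal{W}_1$, there is no $e_0$ available to diagonalize $\varphi$. Instead I use the $\mathbb{Z}$-grading $\deg e_i=i$. Given a $\frac12$-derivation $\varphi$, define for each $m\in\mathbb{Z}$ the component $\varphi_m(e_i)=\gamma^{(m)}_i e_{i+m}$, where $\gamma^{(m)}_i$ is the coefficient of $e_{i+m}$ in $\varphi(e_i)$, and we set $\gamma^{(m)}_i=0$ if $i+m<1$. The bracket is homogeneous, so comparing degree-$(i+j+m)$ parts of the identity $2\varphi[e_i,e_j]=[\varphi e_i,e_j]+[e_i,\varphi e_j]$ shows that each $\varphi_m$ is itself a $\frac12$-derivation. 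Hence it suffices to prove the following: for every $m$, the scalars $\gamma_i=\gamma^{(m)}_i$ satisfying
\begin{equation*}
2(j-i)\gamma_{i+j}=(j-i-m)\gamma_i+(j-i+m)\gamma_j\qquad(i,j\ge1)
\end{equation*}
are constant in $i$ when $m\ge0$, and vanish when $m<0$. This gives $\varphi=\sum_{m\ge0}\gamma^{(m)}_1T_m$.

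For fixed $m\ge 0$, put $\gamma'_i=\gamma_i-\gamma_1$. Since constants solve the equation, $\gamma'$ solves it as well and has $\gamma'_1=0$. Taking $i=1$ gives the recursion
\begin{equation*}
\gamma'_{j+1}=\frac{j+m-1}{2(j-1)}\gamma'_j\qquad(j\ge2),
\end{equation*}
so everything is determined by $\gamma'_2$. Explicitly, $\gamma'_3=\frac{1+m}{2}\gamma'_2$, $\gamma'_4=\frac{(1+m)(2+m)}{8}\gamma'_2$ and $\gamma'_5=\frac{(1+m)(2+m)(3+m)}{48}\gamma'_2$. Substituting into the pair $(i,j)=(2,3)$ yields
\begin{equation*}
\Big(\tfrac{(1+m)(2+m)(3+m)}{24}-(1-m)-\tfrac{(1+m)^2}{2}\Big)\gamma'_2=0.
\end{equation*}
For $m=0$ the bracket equals $\frac14-\frac32\neq0$, and in general it is a nonzero cubic in $m$. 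So $\gamma'_2=0$ except possibly at finitely many integer roots $m\ge 0$. For those I would add the pair $(2,4)$, or $(3,4)$, which gives a second polynomial condition, and check that the two polynomials have no common nonnegative integer root. Then $\gamma'\equiv0$.

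For $m<0$ the same recursion and pair equations apply, with the extra boundary condition $\gamma_i=0$ whenever $i+m<1$. Taking $i=1$ and $j=1-m$, the term $\gamma_1$ drops out and the remaining terms force the values to vanish inductively; in particular $\gamma_1=0$ and then the recursion kills all $\gamma_i$.

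The main obstacle is the arithmetic in the $m\ge0$ case. One has to verify that the polynomial constraints coming from two or three small pairs $(i,j)$ have no common root among the nonnegative integers $m$, so that $\gamma'_2=0$ for every $m$. If that becomes messy, I would first try the pair $(2,j)$ for general $j$, with the closed-form product $\gamma'_j=\gamma'_2\prod_{t=2}^{j-1}\frac{t+m-1}{2(t-1)}$. Comparing the growth in $j$ of both sides should then force $\gamma'_2=0$ uniformly in $m$.
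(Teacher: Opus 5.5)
Your route is sound and genuinely different from the paper's. The paper gives no separate proof for $\mathcal{W}^{+}$; it only says the argument is similar to Theorem~\ref{onewitt}. That argument runs on two elements. The pair $(e_0,e_i)$ gives $\alpha_{i,j}=\alpha_{0,j-i}$ for $j\neq 2i$, and $e_{-1}$ is used in Lemma~\ref{lemphi} to kill the leftover terms $\beta_i e_{2i}$. Neither $e_0$ nor $e_{-1}$ lies in $\mathcal{W}^{+}$, so that proof cannot be copied verbatim. Your decomposition into homogeneous $\frac12$-derivations $\varphi_m$ takes over the role of $e_0$: it separates degrees without needing a grading element inside the algebra. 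The $e_1$-recursion plus one further pair takes over the role of $e_{-1}$. The price is explicit arithmetic, and here that arithmetic is not a formality.

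For $m\ge 0$, the coefficient of $\gamma'_2$ in your $(2,3)$ equation is
\[
\tfrac{1}{24}\bigl(m^3-6m^2+11m-30\bigr)=\tfrac{1}{24}(m-5)(m^2-m+6),
\]
so $m=5$ is a genuine exceptional degree. Your fallback handles it. For $m=5$ and $\gamma'_2=c$, the recursion gives $\gamma'_4=\frac{21}{4}c$ and $\gamma'_6=\frac{63}{8}c$. The pair $(2,4)$, i.e.\ $4\gamma'_6=-3\gamma'_2+7\gamma'_4$, then reads $\frac{63}{2}c=\frac{135}{4}c$, so $c=0$. Your growth idea also works for all $m$ at once: $\gamma'_j=\gamma'_2\binom{m+j-2}{m}2^{2-j}$, so the pair $(2,j)$ says that $(j-2-m)\gamma'_2$ equals $\gamma'_2$ times a rational number tending to $0$.

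Your $m<0$ paragraph, by contrast, has an actual hole at $m=-1$. In the $(1,1-m)$ equation it is $\gamma_{1-m}$ that drops out, since its coefficient $j-1+m$ vanishes. Meanwhile $\gamma_1=0$ is automatic from the boundary convention. For $m\le -2$, the $i=1$ recursion at $j=-m$, together with $\gamma_{-m}=0$, gives $\gamma_{1-m}=0$, and then everything vanishes. For $m=-1$, however, $\gamma_2$ enters no $i=1$ equation with nonzero coefficient. Indeed, the map $e_2\mapsto e_1$, $e_i\mapsto 0$ for $i\neq 2$, satisfies all of them. So ``the recursion kills all $\gamma_i$'' fails there, and you need another pair. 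For instance, $(2,3)$ gives $2\gamma_5=2\gamma_2$, hence $\gamma_2=0$.

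With these repairs your argument proves the theorem. Add the remark that only finitely many $\gamma^{(m)}_1$ are nonzero, because $\varphi(e_1)$ is a finite sum.
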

\begin{proof}
    The proof is similar to Theorem \ref{onewitt}.
\end{proof}

The description of $\frac12$-derivations of the Lie algebra $\mathcal{W}(a,b)$ was established in \cite{BKL}.
\begin{theorem}\label{thm52}
There are no non-trivial $\frac{1}{2}$-derivations of the Lie algebra $\mathcal{W}(a,b)$ for $b\neq -1.$
Let $\varphi$ be $\frac{1}{2}$-derivation of the algebra $\mathcal{W}(a,-1)$, then there are two finite sets of elements
from the basic field $\{\alpha_t\}_{t\in\mathbb{Z}}$ and $\{\beta_t\}_{t\in\mathbb{Z}}$, such that
$$\varphi(e_i)=\sum\limits_{t\in\mathbb{Z}}\alpha_te_{i+t}+\sum\limits_{t\in\mathbb{Z}}\beta_tf_{i+t},
$$
and
$$
\varphi(f_i)=\sum\limits_{t\in\mathbb{Z}}\alpha_tf_{i+t}.
$$
\end{theorem}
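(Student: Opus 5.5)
The plan follows the scheme of Theorem \ref{onewitt}, adapted to the presence of the module part. Let $\varphi$ be a $\frac12$-derivation of $\mathcal{W}(a,b)$ and write
$$\varphi(e_i)=\sum_j \alpha_{i,j}e_j+\sum_j\beta_{i,j}f_j,\qquad \varphi(f_i)=\sum_j\gamma_{i,j}e_j+\sum_j\mu_{i,j}f_j.$$

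\emph{Step 1: the Witt part.} The subalgebra spanned by the $e_i$ is a copy of $\mathcal{W}$, and $I(a,b)$ is an ideal. Projecting the identity $2\varphi[e_i,e_j]=[\varphi(e_i),e_j]+[e_i,\varphi(e_j)]$ onto $\mathcal{W}$ along $I(a,b)$ shows that $e_i\mapsto\sum_j\alpha_{i,j}e_j$ is a $\frac12$-derivation of $\mathcal{W}$. Theorem \ref{BKL} then gives $\alpha_{i,j}=\alpha_{j-i}$, where only finitely many $\alpha_t$ are nonzero because $\varphi(e_0)$ is a finite sum.

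\emph{Step 2: weight equations.} Apply the $\frac12$-derivation identity to the pairs $(e_0,f_i)$ and $(e_0,e_i)$. Since $[e_0,e_j]=-je_j$ and $[e_0,f_j]=-(a+bj)f_j$, comparing coefficients yields linear recursions among the $\gamma$, $\mu$ and $\beta$ coefficients, exactly as in \eqref{eq:rearranged}.
\begin{itemize}
\item For the $e$-component of $\varphi(f_i)$, the pair $(e_0,f_i)$ together with $(f_0,e_i)$ should force $\gamma_{i,j}=0$. The reason is that $[I,I]=0$, so $0=2\varphi[f_i,f_j]$ must equal $[\varphi(f_i),f_j]+[f_i,\varphi(f_j)]$, whose $f$-part is $\sum_k \gamma_{i,k}(\cdots)f_{k+j}-\sum_k\gamma_{j,k}(\cdots)f_{k+i}$. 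Choosing $j$ with the coefficients $k+a+bj$ generic should kill all $\gamma$.
\item For the $f$-component of $\varphi(f_i)$, the pair $(e_k,f_i)$ gives
$$-2(k+a+bi)\varphi(f_{k+i})=[\varphi(e_k),f_i]+[e_k,\varphi(f_i)].$$
Substituting Step 1 produces a recursion relating $\mu_{i+k,\cdot}$ to $\mu_{i,\cdot}$ and $\alpha$. Comparing the coefficient of $f_{k+i+t}$ gives $-2(k+a+bi)\mu_{k+i,k+i+t}$ on the left and $-\alpha_t(k+t+a+bi)-(k+a+b(i+s))\mu_{i,i+s}$ with $s=t$ on the right. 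Taking $k=0$ gives $(a+bi)\mu_{i,i+t}$ expressed in terms of $\alpha_t$, from which one expects $\mu_{i,i+t}=\alpha_t$ up to constraints.
\item For the $\beta$-part, use the pair $(e_k,e_i)$ projected onto $I(a,b)$ to get
$$2(k-i)\beta_{k+i,\cdot}=-(\text{tensor action of } e_i \text{ on } \beta_{k,\cdot})+(\text{action of } e_k \text{ on } \beta_{i,\cdot}).$$
\end{itemize}

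\emph{Step 3: the dichotomy in $b$.} The coefficient comparisons have the shape $(\text{linear in } k,t)\cdot(\text{unknown})=0$. Solving them, the compatibility $\mu_{i,i+t}=\alpha_t$ for all $i,k$ requires $k+t+a+bi+k+a+bi=2(k+a+bi)$ adjusted by the $b$-term, which pins the shift weight. For $b\neq-1$ the system should force $\alpha_t=0$ for $t\ne0$, $\beta=0$ and $\mu_{i,j}=\alpha_0\delta_{ij}$, so $\varphi$ is trivial, meaning a scalar multiple of the identity. For $b=-1$ the bracket $[e_i,f_j]=-(i+a-j)f_{i+j}$ is invariant under the simultaneous shifts $e_i\mapsto e_{i+t}$ and $f_j\mapsto f_{j+t}$, just as the Witt bracket is. One checks, as in Lemma \ref{lemtk}, that $T_t\colon e_i\mapsto e_{i+t},\ f_i\mapsto f_{i+t}$ and $S_t\colon e_i\mapsto f_{i+t},\ f_i\mapsto 0$ are $\frac12$-derivations; the check for $S_t$ uses $[f,f]=0$ and the shift invariance. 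Subtracting $\sum\alpha_tT_t+\sum\beta_tS_t$, with $\beta_t$ read off from $\varphi(e_0)$, reduces to a map vanishing on $e_0$. The recursions of Step 2, run as in Lemma \ref{lemphi} with the pairs $(e_{\pm1},\cdot)$, then force it to be zero.

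The main obstacle is Step 3. One must exclude the exceptional diagonal coefficients, analogous to $\beta_i e_{2i}$ in Lemma \ref{lem:coeff}, for all parameter values $a$. The zero sets of the linear factors $k+a+bi$ vary with $a$ and $b$, so I would first handle generic $a$ using $k=0$ equations. For the finitely many degenerate indices, where $a+bi=0$, I would choose two different $k$ (e.g.\ $k=\pm1$) to obtain a nondegenerate system.
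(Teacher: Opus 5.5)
The paper does not prove this theorem at all; it is quoted from the literature. Your argument therefore has to stand on its own, and as written it does not prove the first assertion.

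\textbf{The case $b\neq-1$ is not proved.} Step 3 only says the system ``should'' force $\alpha_t=0$ for $t\neq0$, $\beta=0$ and $\mu_{i,j}=\alpha_0\delta_{ij}$. The one mechanism you offer is circular: you insert the ansatz $\mu_{i,i+t}=\alpha_t$ into the general-$k$ equation and read off $(1+b)t\,\alpha_t=0$. But your own $k=0$ equation is $(a+bi-bt)\mu_{i,i+t}=(a+bi+t)\alpha_t$. It gives $\mu_{i,i+t}$ as $\alpha_t$ times a ratio of two affine expressions in $i$, and that ratio equals $1$ only when $(1+b)t=0$. So for $b\neq-1$, $t\neq0$ the ansatz is equivalent to the conclusion $\alpha_t=0$, not a step toward it.

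What is needed is this:
\begin{itemize}
\item Take the actual solution of the $k=0$ equation and substitute it, at the indices $i$ and $i+k$, into the equation with $k\neq0$.
\item Clear denominators. Show that the resulting identity, polynomial in $i$ and valid for all but finitely many $i$, fails unless $\alpha_t=0$; evaluating at the root of the denominator attached to $\mu_{i,i+t}$ already does it.
\item Handle the finitely many exceptional indices with a second value of $k$.
\end{itemize}
Nothing is said about why $\beta=0$ when $b\neq-1$, and it does not follow from the $\alpha,\mu$ analysis. Since $[I,I]=0$, the block $\beta$ appears only in the $I(a,b)$-component of the $(e_k,e_i)$ identity. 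So $\beta\colon\mathcal W\to I(a,b)$ is a separate unknown satisfying $2\beta([x,y])=x\cdot\beta(y)-y\cdot\beta(x)$, and you need a computation of the same kind to show it vanishes unless $b=-1$.

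\textbf{Two further problems.} First, the bracket. You took $[e_i,f_j]=-(i+a+bj)f_{i+j}$ from the paper; together with $[e_i,e_j]=(i-j)e_{i+j}$ it satisfies the Jacobi identity only for $b=1$. With it your check for $S_t$ fails: $[f_{i+t},e_j]+[e_i,f_{j+t}]=2(j-i)f_{i+j+t}$, whereas $2S_t([e_i,e_j])=2(i-j)f_{i+j+t}$. Use instead the standard tensor-density bracket $[e_i,f_j]=-(a+j+bi)f_{i+j}$. Then $I(a,-1)$ is the adjoint module with indices shifted by $a$, and both $T_t$ and $S_t$ are $\frac12$-derivations. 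In this convention $\mathrm{ad}\,e_0$ acts on $f_j$ by $-(a+j)$ rather than $-(a+bj)$, so the weight equations of Step 2 must be redone.

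Second, the identity from $[f_i,f_j]=0$ does not kill the $e$-part $\psi(f_i)=\sum_j\gamma_{i,j}e_j$ of $\varphi(f_i)$. Its $f_m$-coefficient only relates $\gamma_{i,m-j}$ to $\gamma_{j,m-i}$, and the two sums can cancel. For instance, when $b=1$ the assignment $\psi(f_i)=e_{i+s}$ satisfies it for every $s$. Use instead the $\mathcal W$-component of the $(e_k,f_i)$ identity, $2\psi([e_k,f_i])=[e_k,\psi(f_i)]$:
\begin{itemize}
\item with $k=0$ it forces $\psi(f_i)\in\mathbb F e_{2(a+i)}$ (read as $0$ if $2a\notin\mathbb Z$);
\item for $k\neq0$ the two sides lie in different weight spaces, $e_{2(a+i+k)}$ versus $e_{2(a+i)+k}$, which gives $\psi=0$.
\end{itemize}

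With these repairs, the $b=-1$ half of your plan goes through: subtract $\sum_t\alpha_tT_t+\sum_t\beta_tS_t$, then remove the leftover single-weight terms with the pairs $(e_{\pm1},\cdot)$ as in Lemma \ref{lemphi}.
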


\section{Local $\frac12$-derivations of infinite-dimensional Lie algebras}

In this section, we study local  $\frac12$-derivations of Witt, positive and one-sided Witt algebra and also thin Lie algebras.

\subsection{Local $\frac12$-derivations of Witt algebras.}

Now we shall give the main result concerning local $\frac12$-derivations of Witt algebras.

\begin{theorem}\label{locwitt}
Any local $\frac{1}{2}$-derivation of the Witt algebra $\mathcal{W}$ is a $\frac{1}{2}$-derivation.
\end{theorem}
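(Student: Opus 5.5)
Let $\Delta$ be a local $\frac12$-derivation of $\mathcal{W}$. By Theorem \ref{BKL}, for every $x\in\mathcal{W}$ there is a finitely supported family $\{\alpha_j(x)\}_{j\in\mathbb{Z}}$ with $\Delta(x)=\sum_j\alpha_j(x)T_j(x)$. Here $T_j(e_i)=e_{i+j}$, which by the same computation as Lemma \ref{lemtk} is a $\frac12$-derivation of $\mathcal{W}$. Since the $T_j$ are shifts, $\varphi_x(x)$ is the product of $x$ with a Laurent polynomial $p_x$, under the identification $e_i\leftrightarrow x^{i+1}\frac{d}{dx}$, i.e. $e_i \leftrightarrow t^{i}$ in $\mathbb{F}[t,t^{-1}]$. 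In this language $\Delta$ is a linear map on $\mathbb{F}[t,t^{-1}]$ such that $\Delta(f)\in f\cdot\mathbb{F}[t,t^{-1}]$ for every $f$. The goal is to show that $\Delta$ is multiplication by a fixed Laurent polynomial $p$. Then $\Delta=\sum_j p_jT_j$, which is a $\frac12$-derivation.

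First take $x=e_0$, which corresponds to $f=1$, and set $p:=\Delta(e_0)$, a Laurent polynomial. Replacing $\Delta$ by $\Delta-\sum_j p_jT_j$, which is still a local $\frac12$-derivation because $\sum_j p_jT_j$ is a global one, we may assume $\Delta(e_0)=0$. It now suffices to show $\Delta(e_i)=0$ for all $i$. For each $i$ we have $\Delta(e_i)=t^i q_i$ for some Laurent polynomial $q_i$. The key step is to apply locality to $x=e_0+e_i$, i.e. $f=1+t^i$. Linearity gives
$$\Delta(e_0+e_i)=t^iq_i=(1+t^i)r$$
for some Laurent polynomial $r$. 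Since $t^i$ is a unit and $1+t^i$ is coprime to it, $(1+t^i)\mid q_i$ in $\mathbb{F}[t,t^{-1}]$. Applying the same argument to $x=e_0+\lambda e_i$ for arbitrary nonzero $\lambda\in\mathbb{F}$ gives $(1+\lambda t^i)\mid q_i$ for infinitely many pairwise coprime polynomials (for $i\neq0$ and $\mathbb{F}$ infinite, which holds in characteristic zero). Hence $q_i=0$ and $\Delta(e_i)=0$. The case $i=0$ is already done.

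This finishes the proof, since $\Delta$ is linear and vanishes on the basis, so the original $\Delta$ equals $\sum_j p_jT_j$.

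The main obstacle is the translation step, namely that Theorem \ref{BKL} really expresses every $\frac12$-derivation as multiplication by a finite Laurent polynomial in the $t$-picture. This needs the $\alpha_j$ to be finitely supported, which is automatic since $\varphi(e_0)\in\mathcal{W}$ is a finite combination. The divisibility argument requires no computation beyond unique factorization in $\mathbb{F}[t,t^{-1}]$.
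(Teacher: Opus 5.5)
Your proof is correct. It shares the paper's skeleton: subtract a global $\frac12$-derivation so that $\Delta(e_0)=0$, then apply locality to two-term elements that carry a free scalar. The second step, however, is organized differently.

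The paper proves the adjacent-index step of Lemma~\ref{Lem1}, $\Delta(e_m)=0\Rightarrow\Delta(e_{m+1})=0$, using the element $e_{m+1}-xe_m$. Its coefficient comparison amounts to showing that the Laurent polynomial $t^{-(m+1)}\Delta(e_{m+1})$ vanishes at every $x\in\mathbb{C}^*$. The downward step (Lemma~\ref{Lem2}) is declared similar, and the two are chained by induction starting from $e_0$.

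You instead identify $\frac12$-derivations once and for all with multiplications by Laurent polynomials. You then treat each $e_i$, $i\neq0$, directly through $e_0+\lambda e_i$, using that the nonunits $1+\lambda t^i$ are pairwise coprime because their differences $(\lambda-\mu)t^i$ are units. This handles positive and negative $i$ uniformly, with no induction and no separate downward lemma. The divisibility formulation also avoids the paper's index bookkeeping, where the support $[s,t]$ of $\varphi_1$ actually varies with $x$.

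The price is that you need coprimality, hence unique factorization in $\mathbb{F}[t,t^{-1}]$, for the higher-degree factors $1+\lambda t^i$. The paper's adjacent pairs produce only linear factors $t-x$, so there it suffices that a nonzero polynomial has finitely many roots.
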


For the proof of this theorem, we need several lemmas.
\begin{lemma}\label{Lem1}
Let $\Delta$ be a local $\frac{1}{2}$-derivation on $\mathcal{W}$. If $\Delta(e_m)=0$, then $\Delta(e_{m+1})=0$ where $m\in\mathbb{Z}$.
\end{lemma}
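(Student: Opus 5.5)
The plan is to translate the statement into a divisibility question about Laurent polynomials, using the description of $\frac12$-derivations of $\mathcal{W}$ in Theorem~\ref{BKL}. I read that theorem as saying that every $\frac12$-derivation $\varphi$ is determined by a family $\{\alpha_j\}$ with only finitely many nonzero terms, so that $\varphi(e_i)=\sum_j\alpha_je_{i+j}$ is a finite sum. This finiteness is needed for $\varphi$ to be a well-defined map $\mathcal{W}\to\mathcal{W}$.

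Fix $m\in\mathbb{Z}$. Identify an element $x=\sum_k c_k e_{m+k}$ with the Laurent polynomial $X(z)=\sum_k c_k z^k\in\mathbb{F}[z,z^{-1}]$. Let $\varphi$ correspond to $\Gamma(z)=\sum_j\alpha_j z^j$. Then $\varphi(e_{m+k})$ corresponds to $z^k\Gamma(z)$, and by linearity $\varphi(x)$ corresponds to the product $\Gamma(z)X(z)$. In this language, the value of any $\frac12$-derivation at $x$ is a Laurent-polynomial multiple of $X(z)$.

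Write $\Delta(e_{m+1})$ as the element corresponding to a Laurent polynomial $P(z)$. For each $t\in\mathbb{F}$, $t\neq0$, take $x_t=e_m+te_{m+1}$, which corresponds to $1+tz$. Since $\Delta$ is linear and $\Delta(e_m)=0$, we get $\Delta(x_t)=t\,\Delta(e_{m+1})$. On the other hand, locality gives a $\frac12$-derivation $\varphi_{x_t}$, with symbol $\Gamma_t(z)$, such that $\Delta(x_t)=\varphi_{x_t}(x_t)$. Comparing coefficients then gives
$$tP(z)=\Gamma_t(z)(1+tz)\quad\text{in }\mathbb{F}[z,z^{-1}].$$
Hence $1+tz$ divides $P(z)$ for every nonzero $t$.

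The polynomials $1+tz$ for distinct nonzero $t$ are pairwise non-associate primes in $\mathbb{F}[z,z^{-1}]$, since none of them is a unit times a power of $z$. The field has characteristic zero, so it is infinite and there are infinitely many such $t$. A nonzero Laurent polynomial has only finitely many prime divisors up to associates: after multiplying by a power of $z$ it becomes an ordinary polynomial of finite degree. Therefore $P=0$, that is, $\Delta(e_{m+1})=0$.

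The only step needing care is the finite-support claim in Theorem~\ref{BKL}. If infinitely many $\alpha_j$ were allowed, the product $\Gamma(z)(1+tz)$ would live in a completion where $1+tz$ is invertible, and the argument would fail. I would justify finiteness as above: $\varphi(e_i)$ must be a finite linear combination of basis vectors of $\mathcal{W}$. The rest is routine coefficient comparison. The same argument with $x_t=e_{m+1}+te_m$ would also give the symmetric statement $\Delta(e_{m+1})=0\Rightarrow\Delta(e_m)=0$.
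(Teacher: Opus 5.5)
Your proof is correct and is essentially the paper's argument. The paper tests locality on the family $e_{m+1}-xe_m$, which is your $e_m+te_{m+1}$ up to scaling with $t=-1/x$; by comparing coefficients it shows that the Laurent polynomial encoding $\Delta(e_{m+1})$ vanishes at every $x\neq 0$. Your divisibility formulation in $\mathbb{F}[z,z^{-1}]$ packages that comparison more cleanly, and it does not depend on the summation bounds $s,t$, which in the paper tacitly vary with $x$.
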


\begin{proof}
By Theorem \ref{BKL}, any $\frac{1}{2}$-derivation $\varphi$ on $\mathcal{W}$ satisfies
\[
\varphi(e_m)=\sum_{j\in\mathbb{Z}}\alpha_j e_{m+j}.
\]
Hence, there exist integers $s\le t$ such that
\[
\varphi(e_m)=\sum_{j=s}^{t}\alpha_j e_{m+j}.
\]

Consider the equality
\[
\varphi_1(e_{m+1})-x\varphi_1(e_m)
=\Delta(e_{m+1}-x e_m)
=\Delta(e_{m+1})
=\varphi_2(e_{m+1}),
\]
where $x\in\mathbb{C}^*$ and $\varphi_1,\varphi_2$ are $\frac{1}{2}$-derivations associated with $\Delta$.

Then
\[
\sum_{j=s}^{t}\alpha_j e_{m+1+j}
-
x\sum_{j=s}^{t}\alpha_j e_{m+j}
=
\sum_{j=s'}^{t'}\beta_j e_{m+1+j},
\]
where $s'=s-1$ and $t'=t$.

Writing this equality with respect to the basis $\{e_k\}$, we obtain
\[
(-x\alpha_s-\beta_{s-1})e_{m+s}
+\sum_{j=s}^{t-1}(\alpha_j-x\alpha_{j+1}-\beta_j)e_{m+1+j}
+(\alpha_t-\beta_t)e_{m+1+t}=0.
\]

Consequently,
\[
\beta_t+x^{-1}\beta_{t-1}+x^{-2}\beta_{t-2}
+\cdots
+x^{-(t-s)}\beta_s
+x^{-(t-s+1)}\beta_{s-1}=0.
\]

Since this equality holds for all $x\in\mathbb{C}^*$ and the coefficients
$\beta_j$ $(s-1\le j\le t)$ are independent of $x$, it follows that
\[
\beta_j=0,\qquad s-1\le j\le t.
\]

Therefore, $\Delta(e_{m+1})=0$.
\end{proof}

\begin{lemma}\label{Lem2}
Let $\Delta$ be a local $\frac{1}{2}$-derivation on $\mathcal{W}$. If $\Delta(e_m)=0$, then $\Delta(e_{m-1})=0$ where $m\in\mathbb{Z}$.
\end{lemma}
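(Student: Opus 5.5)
We argue exactly as in Lemma~\ref{Lem1}, with the roles of the two ends of the support interchanged. Assume $\Delta(e_m)=0$ and fix $x\in\mathbb{C}^*$. Linearity of $\Delta$ gives
\[
\Delta(e_{m-1}-x e_m)=\Delta(e_{m-1}).
\]
By locality there are $\frac12$-derivations $\varphi_1$ (for the vector $e_{m-1}-xe_m$) and $\varphi_2$ (for $e_{m-1}$) such that
\[
\varphi_1(e_{m-1})-x\varphi_1(e_m)=\varphi_2(e_{m-1}).
\]
By Theorem~\ref{BKL}, $\varphi_1(e_k)=\sum_{j=s}^{t}\alpha_j e_{k+j}$ for $k=m-1,m$, with the same finitely many coefficients $\alpha_j$. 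Likewise $\varphi_2(e_{m-1})=\Delta(e_{m-1})=\sum_j\beta_j e_{m-1+j}$, where the $\beta_j$ do not depend on $x$ and have finite support. The $\alpha_j$ and the bounds $s,t$ may depend on $x$.

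Expand both sides in the basis. The left side is
\[
\sum_{j=s}^t\alpha_j e_{m-1+j}-x\sum_{j=s}^t\alpha_je_{m+j},
\]
which is supported on indices $m-1+s,\dots,m+t$. Hence $\beta_j=0$ outside $s\le j\le t+1$. Comparing coefficients of $e_{m-1+j}$ gives
\[
\beta_s=\alpha_s,\qquad \beta_j=\alpha_j-x\alpha_{j-1}\ (s<j\le t),\qquad \beta_{t+1}=-x\alpha_t.
\]
We eliminate the $\alpha$'s by telescoping from the bottom: $\alpha_s=\beta_s$, then $\alpha_{s+1}=\beta_{s+1}+x\beta_s$, and in general
\[
\alpha_j=\sum_{i=s}^{j}x^{j-i}\beta_i .
\]
Substituting into the last relation $\beta_{t+1}+x\alpha_t=0$ gives
\[
\beta_{t+1}+x\beta_t+x^2\beta_{t-1}+\cdots+x^{t-s+1}\beta_s=0 .
\]
This is a polynomial identity in $x$, and it is the mirror image of the identity obtained in Lemma~\ref{Lem1}, with powers $x$ in place of $x^{-1}$.

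The main obstacle is that $s$ and $t$ may depend on $x$, so this is not literally one fixed polynomial vanishing for all $x$. We handle it by working with the fixed finite support $S$ of $\Delta(e_{m-1})$, which is independent of $x$. Let $\beta_p$ be the coefficient with the largest index in $S$, so $p\le t+1$. In the relation above, each $\beta_i$ is multiplied by $x^{t+1-i}$, and zero coefficients for indices outside $S$ contribute nothing. Dividing by $x^{t+1-p}$ turns the identity into
\[
\sum_{i\in S}x^{p-i}\beta_i=0 .
\]
This is a single polynomial in $x$ with $x$-independent coefficients, and it vanishes for every $x\in\mathbb{C}^*$. Since $\mathbb{C}^*$ is infinite, all coefficients vanish. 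In particular the leading one, $\beta_p$, is zero, which contradicts the choice of $p$ unless $S=\emptyset$. Therefore $\Delta(e_{m-1})=0$.
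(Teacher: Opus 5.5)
Your proof is correct and follows the paper's route. The paper only says the argument mirrors Lemma~\ref{Lem1}, and that is exactly your comparison of $\varphi_1(e_{m-1})-x\varphi_1(e_m)$ with the fixed element $\Delta(e_{m-1})$, leading to a polynomial identity in $x$; your normalization by the largest index $p$ of the $x$-independent support of $\Delta(e_{m-1})$ correctly handles the $x$-dependence of $s,t$, which the paper's Lemma~\ref{Lem1} passes over (note that $\beta_p$ is the constant term of the resulting polynomial rather than its leading coefficient, which does not affect the conclusion).
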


\begin{proof}
The proof is similar to that of Lemma~\ref{Lem1}.
\end{proof}

\begin{lemma}\label{witte_0}
Let $\Delta$ be a local $\frac{1}{2}$-derivation on $\mathcal{W}$. If $\Delta(e_0)=0$, then
\[
\Delta(e_m)=0 \quad \text{for all } m\in\mathbb{Z}.
\]
\end{lemma}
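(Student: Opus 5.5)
This follows by induction from Lemma~\ref{Lem1} and Lemma~\ref{Lem2}, taking $m=0$ as the base case.

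\emph{Positive indices.} We argue by induction on $m\ge 0$. The base case $\Delta(e_0)=0$ is the hypothesis. If $\Delta(e_m)=0$ for some $m\ge 0$, then Lemma~\ref{Lem1} gives $\Delta(e_{m+1})=0$. Hence $\Delta(e_m)=0$ for all $m\ge 0$.

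\emph{Negative indices.} We argue by downward induction. Starting again from $\Delta(e_0)=0$, suppose $\Delta(e_{m})=0$ for some $m\le 0$. Then Lemma~\ref{Lem2} gives $\Delta(e_{m-1})=0$. This yields $\Delta(e_m)=0$ for every $m\le 0$.

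Combining the two inductions, $\Delta(e_m)=0$ for all $m\in\mathbb{Z}$.

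There is no real obstacle here: all the substance lies in Lemmas~\ref{Lem1} and~\ref{Lem2}. The key point in them is the polynomial-in-$x^{-1}$ argument. It uses that $\Delta(e_{m+1}-xe_m)=\Delta(e_{m+1})$ for every $x\in\mathbb{C}^*$, together with the finite-support form of $\frac12$-derivations from Theorem~\ref{BKL}. If one wanted to double-check anything, it would be that the proof of Lemma~\ref{Lem2} really is symmetric to that of Lemma~\ref{Lem1}. This amounts to taking the combination $e_{m-1}-xe_m$ and comparing the extreme coefficients in the same way. Granting that, the lemma is a direct two-sided induction.
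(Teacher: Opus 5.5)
Your proof is correct and matches the paper's: it too gets $\Delta(e_m)=0$ for $m>0$ by upward induction from $\Delta(e_0)=0$ using Lemma~\ref{Lem1}, and for $m<0$ by downward induction using Lemma~\ref{Lem2}. As you note, all the substance lies in those two lemmas.
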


\begin{proof}
Assume $\Delta(e_0)=0$. By Lemma~\ref{Lem1}, using the induction, we obtain
\[
\Delta(e_m)=0 \quad \text{for all } m>0.
\]

Similarly, applying Lemma~\ref{Lem2} yields
\[
\Delta(e_m)=0 \quad \text{for all } m<0.
\]

Hence, $\Delta(e_m)=0$ for all $m\in\mathbb{Z}$.
\end{proof}

\textit{Proof of Theorem \ref{locwitt}.} Let $\Delta$ be any local $\frac12$-derivation on $\mathcal{W}$. Then there exists a $\frac12$-derivation $\varphi$ on $\mathcal{W}$ such that
$$\Delta(e_0)=\varphi(e_0)$$

Set $\Delta_1=\Delta-\varphi$. Then $\Delta_1$ is a local $\frac12$-derivation such that $\Delta_1(e_0)=0$.
By Lemma \ref{witte_0}, $\Delta_1(e_m)=0$ for all $m\in\mathbb{Z}$. Then $\Delta_1\equiv0.$
Thus $\Delta=\varphi$ is a $\frac12$-derivation. The proof is complete. \hfill $\square$

\subsection{Local $\frac12$-derivation on subalgebras of the Witt algebra.}
In this subsection, we study local $\frac12$-derivations of positive Witt and
one-sided Witt algebras.

Now we shall give the main result concerning local  $\frac12$-derivations of positive Witt and one-sided Witt algebras.

\begin{theorem}\label{locwittpos}
Any local $\frac12$-derivation of $\mathcal{W}^+$ and $\mathcal{W}_1$ is a $\frac12$-derivation.
\end{theorem}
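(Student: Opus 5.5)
The plan is to copy the scheme used for Theorem~\ref{locwitt}, replacing two-sided shifts by the one-sided shifts described in Theorem~\ref{onewitt} and Theorem~\ref{prop:half-der-Wplus-generators}. In both algebras every $\frac12$-derivation has the upper-triangular shift form $\varphi(e_i)=\sum_{j\ge 0}\alpha_j e_{i+j}$, with the same coefficients for every $i$. For $\mathcal{W}^+$ this follows from \eqref{eq:halfWplus-form-gen} after reindexing. I will treat the coefficient family as finitely supported, since $\varphi(e_i)$ must lie in the algebra; this is the same finiteness that the proof of Lemma~\ref{Lem1} uses.

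\emph{Reduction.} Let $\Delta$ be a local $\frac12$-derivation and let $e_{m_0}$ be the lowest basis element: $m_0=-1$ for $\mathcal{W}_1$ and $m_0=1$ for $\mathcal{W}^+$. Choose a $\frac12$-derivation $\varphi$ with $\Delta(e_{m_0})=\varphi(e_{m_0})$ and set $\Delta_1=\Delta-\varphi$. Then $\Delta_1$ is again a local $\frac12$-derivation, because a difference of $\frac12$-derivations is a $\frac12$-derivation, and $\Delta_1(e_{m_0})=0$. It therefore suffices to prove an analogue of Lemma~\ref{Lem1}: if $\Delta(e_m)=0$, then $\Delta(e_{m+1})=0$. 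Induction upward from $m_0$ then gives $\Delta_1=0$ on the whole basis, hence $\Delta_1\equiv 0$ by linearity, and so $\Delta=\varphi$. No downward step (the analogue of Lemma~\ref{Lem2}) is needed, since the basis is bounded below.

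\emph{Key step.} Assume $\Delta(e_m)=0$.
\begin{enumerate}
\item Fix once and for all a $\frac12$-derivation $\varphi_2$ with $\Delta(e_{m+1})=\varphi_2(e_{m+1})=\sum_{j=0}^{N}\beta_j e_{m+1+j}$. Here $N$ and the $\beta_j$ do not depend on any parameter.
\item For each $x\in\mathbb{F}^*$, pick $\varphi_x$ with coefficients $\alpha_j=\alpha_j(x)$ such that
\[
\Delta(e_{m+1})=\Delta(e_{m+1}-xe_m)=\sum_{j\ge0}\alpha_j e_{m+1+j}-x\sum_{j\ge0}\alpha_j e_{m+j}.
\]
\item Compare the coefficient of $e_m$. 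The right-hand side of step 1 starts at $e_{m+1}$, so $x\alpha_0=0$ and hence $\alpha_0=0$.
\item Compare the coefficient of $e_{m+1+j}$ to get $\beta_j=\alpha_j-x\alpha_{j+1}$ for $j\ge 0$, with $\beta_j=0$ for $j>N$.
\item Solve this recursion from the bottom, starting from $\alpha_0=0$. This gives $\alpha_{k}=-\sum_{i<k}\beta_i x^{-(k-i)}$.
\item Since $\alpha(x)$ is finitely supported, $\alpha_k=0$ for some $k>N$. Multiplying by $x^{k}$ gives $\sum_{i=0}^{N}\beta_i x^{i}=0$.
\end{enumerate}
This polynomial identity holds for every $x\in\mathbb{F}^*$, an infinite set in characteristic zero, so all $\beta_i=0$. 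Hence $\Delta(e_{m+1})=0$.

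\emph{Main obstacle.} The delicate point is that the support of $\varphi_x$ may depend on $x$. The plan handles this by anchoring the argument on the fixed expansion of $\Delta(e_{m+1})$ and running the recursion upward from $\alpha_0=0$. This way only the existence of some vanishing index $k>N$ is used, rather than a uniform bound in $x$. The argument for $\mathcal{W}^+$ is identical after the index shift; I expect the only care needed there is matching the indexing of Theorem~\ref{prop:half-der-Wplus-generators} to the shift form.
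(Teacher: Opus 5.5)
Your proposal is correct and follows the paper's proof. The paper likewise subtracts a $\frac12$-derivation agreeing with $\Delta$ at the lowest basis element, then inducts upward using $\Delta(e_{m+1}-xe_m)=\Delta(e_{m+1})$ and a polynomial identity in $x$ (Lemma~\ref{PLem1}); your bottom-up solution of the recursion from $\alpha_0=0$, anchored on a fixed expansion of $\Delta(e_{m+1})$, just handles the $x$-dependence of the support of $\varphi_x$ more carefully than the paper's common bound $t$.
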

 We prove the theorem for the algebra $\mathcal{W}^+$, and for the algebra $\mathcal{W}_1$, the proof is similar. For the proof of this theorem, we need several lemmas.

\begin{lemma}\label{PLem1}
Let $\Delta$ be a local $\frac{1}{2}$-derivation on $\mathcal{W}^+$. If $\Delta(e_m)=0$, then $\Delta(e_{m+1})=0$ where $m\in\mathbb{N}$.
\end{lemma}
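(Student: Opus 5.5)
I will follow the argument of Lemma~\ref{Lem1}, now using the description of $\frac12$-derivations of $\mathcal{W}^+$ from Theorem~\ref{prop:half-der-Wplus-generators}. There every $\frac12$-derivation $\varphi$ acts as an upper-triangular ``shift'' operator: $\varphi(e_i)=\sum_{k\ge i}\alpha_{k-i+1}e_k$. Thus the coefficients $\alpha_1,\alpha_2,\dots$ do not depend on $i$, and $\varphi(e_i)$ involves only $e_k$ with $k\ge i$. Since $\varphi(e_i)$ is an element of $\mathcal{W}^+$, it is a finite sum, so only finitely many $\alpha_j$ enter a given image.

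Fix $x\in\mathbb{C}^*$. Apply locality to the vector $e_{m+1}-xe_m$, using $\Delta(e_m)=0$ and linearity of $\Delta$. This gives $\frac12$-derivations $\varphi_1,\varphi_2$ (with $\varphi_1$ depending on $x$) such that
\[
\varphi_1(e_{m+1})-x\varphi_1(e_m)=\Delta(e_{m+1})=\varphi_2(e_{m+1}).
\]
Write $\varphi_1(e_m)=\sum_{j=1}^{t}\alpha_j e_{m+j-1}$, so that $\varphi_1(e_{m+1})=\sum_{j=1}^{t}\alpha_j e_{m+j}$. Write also $\varphi_2(e_{m+1})=\sum_{j\ge1}\beta_j e_{m+j}$; here the $\beta_j$ are fixed, since they are the coefficients of $\Delta(e_{m+1})$ and do not depend on $x$.

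Next compare coefficients of the basis vectors. The coefficient of $e_m$ gives $-x\alpha_1=0$, so $\alpha_1=0$. For $k\ge1$, the coefficient of $e_{m+k}$ gives
\[
\alpha_k-x\alpha_{k+1}=\beta_k .
\]
Solving this triangular system from the top (with $\alpha_j=0$ for $j>t$, and enlarging $t$ so that $\beta_j=0$ for $j>t$), each $\alpha_k$ is a polynomial in $x$ with coefficients built from the $\beta$'s: $\alpha_k=\sum_{r\ge0}x^r\beta_{k+r}$. The condition $\alpha_1=0$ then becomes
\[
\beta_1+x\beta_2+x^2\beta_3+\cdots+x^{t-1}\beta_t=0 .
\]
This is the analogue of the relation obtained in Lemma~\ref{Lem1}.

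The point needing care is that $t$ may depend on $x$. To handle it, enlarge $t$ beyond the support of the fixed element $\Delta(e_{m+1})$; the polynomial identity above then involves only the finitely many fixed $\beta_j$. It holds for every $x\in\mathbb{C}^*$, i.e.\ for infinitely many $x$, so all $\beta_j$ vanish. Hence $\Delta(e_{m+1})=0$. Compared with Lemma~\ref{Lem1}, the situation is easier, because the images only move upward: no negative-index terms appear and no ``$s'=s-1$'' bookkeeping is needed. The only real obstacle is the bookkeeping showing that the $\beta_j$ are independent of $x$ while the $\alpha_j$ are not, which the argument above handles. The same argument works verbatim for $\mathcal{W}_1$ using Theorem~\ref{onewitt}.
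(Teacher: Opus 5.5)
Your proposal is correct and follows the paper's proof essentially step for step. Both use locality at $e_{m+1}-xe_m$, the shift form of $\frac12$-derivations from Theorem~\ref{prop:half-der-Wplus-generators}, and the triangular coefficient comparison, arriving at the same polynomial identity in $x$ with fixed coefficients $\beta_j$: your $\beta_1+x\beta_2+\cdots+x^{t-1}\beta_t=0$ is the paper's relation multiplied by $x^{t-1}$. Your remark that $t$ may depend on $x$, and must be enlarged past the fixed support of $\Delta(e_{m+1})$, makes rigorous a point the paper passes over by simply setting $t'=t$.
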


\begin{proof}
By Theorem \ref{prop:half-der-Wplus-generators}, any $\frac{1}{2}$-derivation $\varphi$ on $\mathcal{W}^+$ satisfies
\[
\varphi(e_m)=\sum_{j\geq m}\alpha_{j-m+1} e_{j}.
\]
Hence, there exist integers $1\le t$ such that
\[
\varphi(e_m)=\sum_{j=1}^{t}\alpha_j e_{m+j-1}.
\]

Consider the equality
\[
\varphi_1(e_{m+1})-x\varphi_1(e_m)
=\Delta(e_{m+1}-x e_m)
=\Delta(e_{m+1})
=\varphi_2(e_{m+1}),
\]
where $x\in\mathbb{C}^*$ and $\varphi_1,\varphi_2$ are $\frac{1}{2}$-derivations associated with $\Delta$.

Then
\[
\sum_{j=1}^{t}\alpha_j e_{m+j}
-
x\sum_{j=1}^{t}\alpha_j e_{m+j-1}
=
\sum_{j=1}^{t'}\beta_j e_{m+j},
\]
where $t'=t$.

Writing this equality with respect to the basis $\{e_k\}$, we obtain
\[
-x\alpha_1e_{m}
+\sum_{j=1}^{t-1}(\alpha_j-x\alpha_{j+1}-\beta_j)e_{m+j}
+(\alpha_t-\beta_t)e_{m+t}=0.
\]

Consequently,
\[
\beta_t+x^{-1}\beta_{t-1}+x^{-2}\beta_{t-2}
+\cdots
+x^{-(t-2)}\beta_2
+x^{-(t-1)}\beta_{1}=0.
\]

Since this equality holds for all $x\in\mathbb{C}^*$ and the coefficients
$\beta_j$ $(s-1\le j\le t)$ are independent of $x$, it follows that
\[
\beta_j=0,\qquad 1\le j\le t.
\]

Therefore, $\Delta(e_{m+1})=0$.
\end{proof}

\begin{lemma}\label{pwitte_1}
Let $\Delta$ be a local $\frac{1}{2}$-derivation on $\mathcal{W}^+$. If $\Delta(e_1)=0$, then
\[
\Delta(e_m)=0 \quad \text{for all } m\in\mathbb{N}.
\]
\end{lemma}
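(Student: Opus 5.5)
The argument is a direct induction on $m$ that uses Lemma~\ref{PLem1} as the inductive step, exactly parallel to the proof of Lemma~\ref{witte_0} for $\mathcal{W}$.

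Since the basis of $\mathcal{W}^+$ is indexed by $\mathbb{N}=\{1,2,\dots\}$, the element $e_1$ is the lowest basis vector. So only the upward propagation of Lemma~\ref{PLem1} is needed, and no analogue of Lemma~\ref{Lem2} is required.

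The steps are as follows.
\begin{enumerate}
\item The base case $m=1$ is exactly the hypothesis $\Delta(e_1)=0$.
\item For the inductive step, assume $\Delta(e_m)=0$ for some $m\in\mathbb{N}$. Lemma~\ref{PLem1} applied to this $m$ gives $\Delta(e_{m+1})=0$.
\item By induction, $\Delta(e_m)=0$ for every $m\in\mathbb{N}$.
\end{enumerate}

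All the substantive work is contained in Lemma~\ref{PLem1}. The only point needing care is that Lemma~\ref{PLem1} is valid for every $m\ge 1$, including $m=1$. Its proof uses only the form of $\frac12$-derivations given in Theorem~\ref{prop:half-der-Wplus-generators}, namely that $\varphi(e_m)$ is a finite combination of $e_k$ with $k\ge m$. It then compares coefficients in the identity $\varphi_1(e_{m+1})-x\varphi_1(e_m)=\varphi_2(e_{m+1})$ for all $x\in\mathbb{C}^*$. No index below $1$ ever occurs, so the argument applies uniformly for all $m$ and the induction goes through.

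For later use, note the following consequence. If $\Delta$ is a local $\frac12$-derivation and $\varphi$ is a $\frac12$-derivation with $\varphi(e_1)=\Delta(e_1)$, then $\Delta-\varphi$ is again a local $\frac12$-derivation vanishing at $e_1$. Hence $\Delta-\varphi$ vanishes identically by the statement just proved. This is how the statement will feed into Theorem~\ref{locwittpos}.
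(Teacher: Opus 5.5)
Your proof is correct and is the same as the paper's. The hypothesis $\Delta(e_1)=0$ is the base case, Lemma~\ref{PLem1} gives the step $m\to m+1$, and no downward step is needed because $e_1$ is the lowest basis vector of $\mathcal{W}^+$.

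One small imprecision in your side remark does not affect the argument, since you only cite the lemma. The proof of Lemma~\ref{PLem1} needs more than the fact that $\varphi(e_m)$ lies in the span of the $e_k$ with $k\ge m$. It needs the shift-invariance of the coefficients, $\varphi(e_i)=\sum_j\alpha_j e_{i+j-1}$ with the same $\alpha_j$ for every $i$, because that is what relates $\varphi_1(e_{m+1})$ to $\varphi_1(e_m)$.
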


\begin{proof}
Assume $\Delta(e_1)=0$. By Lemma~\ref{PLem1}, using the induction, we obtain
\[
\Delta(e_m)=0 \quad \text{for all } m>1.
\]

Hence, $\Delta(e_m)=0$ for all $m\in\mathbb{N}$.
\end{proof}

\textit{Proof of Theorem \ref{locwittpos}. }Let $\Delta$ be any local $\frac12$-derivation on $\mathcal{W}^+$. Then there exists a $\frac12$-derivation $\varphi$ on $\mathcal{W}^+$ such that
$$\Delta(e_1)=\varphi(e_1)$$

Set $\Delta_1=\Delta-\varphi$. Then $\Delta_1$ is a local $\frac12$-derivation such that $\Delta_1(e_1)=0$.
By Lemma \ref{pwitte_1}, $\Delta_1(e_m)=0$ for all $m\in\mathbb{N}$. Then $\Delta_1\equiv0.$
Thus $\Delta=\varphi$ is a $\frac12$-derivation. The proof is complete. \hfill $\square$

\subsection{Local $\frac12$-derivation on $\mathcal{W}(a,b)$.}
In this subsection, we study local $\frac{1}{2}$-derivation on Lie algebra $\mathcal{W}(a,b)$.

Now we shall give the main result concerning local $\frac{1}{2}$-derivation of Lie algebra $\mathcal{W}(a,b)$.

\begin{theorem}\label{locwitt(a,b)}
Any local $\frac{1}{2}$-derivation of Lie algebra $\mathcal{W}(a,b)$ is a $\frac{1}{2}$-derivation.
\end{theorem}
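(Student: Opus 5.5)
We follow the same scheme as for Theorem \ref{locwitt}, splitting into two cases according to Theorem \ref{thm52}. If $b\neq -1$, every $\frac12$-derivation of $\mathcal{W}(a,b)$ is trivial. A local $\frac12$-derivation then satisfies $\Delta(x)=\varphi_x(x)$ with $\varphi_x$ trivial, so $\Delta$ is itself trivial; this case is immediate. We therefore assume $b=-1$. In that case every $\frac12$-derivation is described by two finitely supported families $\{\alpha_t\}$ and $\{\beta_t\}$, and has the form
$$\varphi(e_i)=\sum_t\alpha_te_{i+t}+\sum_t\beta_tf_{i+t},\qquad \varphi(f_i)=\sum_t\alpha_tf_{i+t}.$$

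\textbf{Reduction.} Choose a $\frac12$-derivation $\varphi$ with $\Delta(e_0)=\varphi(e_0)$. Since $\varphi(e_0)=\sum_t\alpha_te_t+\sum_t\beta_tf_t$, both families are read off from $\Delta(e_0)$, so $\varphi$ is determined. Set $\Delta_1=\Delta-\varphi$; it is a local $\frac12$-derivation with $\Delta_1(e_0)=0$. It then suffices to prove $\Delta_1(e_m)=0$ and $\Delta_1(f_m)=0$ for all $m\in\mathbb{Z}$.

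\textbf{Propagation along the $e$-chain.} We repeat the argument of Lemma \ref{Lem1} and Lemma \ref{Lem2}. Suppose $\Delta_1(e_m)=0$. For $x\in\mathbb{C}^*$ we have $\Delta_1(e_{m+1})=\Delta_1(e_{m+1}-xe_m)=\psi(e_{m+1})-x\psi(e_m)$ for some $\frac12$-derivation $\psi$ depending on $x$. We also have $\Delta_1(e_{m+1})=\varphi_2(e_{m+1})$ for a fixed $\varphi_2$. The $e$-components and the $f$-components separate, since $\{e_k,f_k\}$ is a basis. In each component we obtain exactly the shifted finite sequence identity of Lemma \ref{Lem1}. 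That is, the coefficients $\gamma_j$ of $\Delta_1(e_{m+1})$ in the $e$-part, and separately in the $f$-part, satisfy $\sum_j x^{-(t-j)}\gamma_j=0$ for all $x\in\mathbb{C}^*$. Hence they vanish, and $\Delta_1(e_{m+1})=0$. The downward step $\Delta_1(e_{m-1})=0$ is analogous, and induction gives $\Delta_1(e_m)=0$ for all $m$.

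\textbf{The $f$-chain; main obstacle.} Here the local information is weaker. A $\frac12$-derivation maps $f_m$ to $\sum_t\alpha_tf_{m+t}$, with no $\beta$-part. To start the $f$-chain we use $\Delta_1(e_0+f_0)=\Delta_1(f_0)=\psi(e_0)+\psi(f_0)$. The $e$-component of this is $\sum\alpha_te_t$, and it must vanish because $\Delta_1(f_0)=\varphi'(f_0)$ lies in the span of the $f_k$. Hence all $\alpha_t(\psi)=0$, so $\psi(f_0)=0$ and therefore $\Delta_1(f_0)=0$. The same trick applied to $e_m+f_m$ gives $\Delta_1(f_m)=0$ for every $m$, because $\Delta_1(e_m)=0$ is already known. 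We expect this step to be the delicate one: one has to track carefully that the $e$-part of $\psi(e_m)$ forces the $\alpha$'s of $\psi$ to vanish. This works because the shift map $\sum\alpha_te_{m+t}$ is injective in $(\alpha_t)$. Linearity then gives $\Delta_1\equiv0$, so $\Delta=\varphi$ is a $\frac12$-derivation.
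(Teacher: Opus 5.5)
Your reduction to $\Delta_1(e_0)=0$ and your propagation along the $e$-chain are fine; they are exactly the paper's Lemmas~\ref{Lem100}--\ref{300}. The gap is in the $f$-chain, which is what Lemma~\ref{Lem400} handles. From $\Delta_1(f_0)=\Delta_1(e_0+f_0)=\psi(e_0)+\psi(f_0)=\sum_t\alpha_te_t+\sum_t(\alpha_t+\beta_t)f_t$, comparing $e$-components does give $\alpha_t(\psi)=0$. But what remains is $\Delta_1(f_0)=\sum_t\beta_t(\psi)f_t$, not $0$: you dropped the $f$-component $\sum_t\beta_tf_t$ of $\psi(e_0)$. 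The $\beta_t$ form an arbitrary finitely supported family, so every finite combination of the $f_t$ equals $\psi(e_0+f_0)$ for a suitable $\psi$ (take $\alpha=0$). Hence the test vector $e_m+f_m$ gives no information at all about $\Delta_1(f_m)$. The injectivity you invoke only kills the $\alpha$'s; the $\beta$'s are the real difficulty.

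The missing idea is a test vector in which the $\beta$-contribution appears twice with different shifts, so it can no longer match an arbitrary $\Delta_1(f_m)$. The paper takes $f_m+e_m+e_k$ with $k-m$ larger than the width of the support of $\Delta_1(f_m)$. The $e$-part $\sum_t\alpha_t(e_{m+t}+e_{k+t})=0$ forces $\alpha=0$. Then $\Delta_1(f_m)=\sum_t\beta_t(f_{m+t}+f_{k+t})$, whose support has width at least $k-m$ if $\beta\neq0$. Hence $\beta=0$ and $\Delta_1(f_m)=0$.

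An alternative closer to your $e$-chain argument is to test on $f_m+e_m-xe_{m+1}$ for all $x\in\mathbb{C}^*$. Let $\Delta_1(f_m)=\sum_t\gamma_tf_{m+t}$, and let $A$, $B$, $\Gamma$ be the Laurent polynomials $\sum_t\alpha_tz^t$, $\sum_t\beta_tz^t$ (for $\psi=\psi_x$) and $\sum_t\gamma_tz^t$. The $e$-part gives $(1-xz)A=0$, so $A=0$. The $f$-part then gives $\Gamma=(1-xz)B$, so $\Gamma(x^{-1})=0$ for every $x\in\mathbb{C}^*$, hence $\Gamma=0$.

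A smaller point: in the case $b\neq-1$, passing from $\Delta(x)\in\mathbb{F}x$ for all $x$ to $\Delta$ being a scalar map also deserves a line, though the paper is equally brief there.
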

For the proof of this theorem, we need several lemmas.

\begin{lemma}\label{Lem100}
Let $\Delta$ be a local $\frac{1}{2}$-derivation on $\mathcal{W}(a,-1)$. If $\Delta(e_m)=0$, then $\Delta(e_{m+1})=0$ where $m\in\mathbb{Z}$.
\end{lemma}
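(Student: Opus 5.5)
Following the proof of Lemma~\ref{Lem1}, I would use the description of $\frac12$-derivations of $\mathcal{W}(a,-1)$ in Theorem~\ref{thm52}. Every such $\varphi$ satisfies
\[
\varphi(e_m)=\sum_{j=s}^{t}\alpha_j e_{m+j}+\sum_{j=p}^{q}\beta_j f_{m+j}
\]
for suitable finite index ranges. The essential feature is that both components are ``shift'' operators with coefficients independent of $m$. The same sequences act on $e_{m+1}$, with every index raised by one.

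Fix $x\in\mathbb{C}^*$. Since $\Delta$ is linear and $\Delta(e_m)=0$, locality applied to $e_{m+1}-xe_m$ and to $e_{m+1}$ gives $\frac12$-derivations $\varphi_1,\varphi_2$ with
\[
\varphi_1(e_{m+1})-x\varphi_1(e_m)=\Delta(e_{m+1})=\varphi_2(e_{m+1}).
\]
Write $\varphi_1$ with coefficient sequences $(\alpha_j),(\beta_j)$ and $\varphi_2$ with $(\gamma_j),(\delta_j)$. The basis $\{e_k\}\cup\{f_k\}$ splits the equation into an $e$-part and an $f$-part, and the two parts do not interact. Each part has exactly the shape handled in Lemma~\ref{Lem1}:
\[
\sum_j\alpha_j e_{m+1+j}-x\sum_j\alpha_j e_{m+j}=\sum_j\gamma_j e_{m+1+j},
\qquad
\sum_j\beta_j f_{m+1+j}-x\sum_j\beta_j f_{m+j}=\sum_j\delta_j f_{m+1+j}.
\]
In the $e$-part I compare coefficients from the top index downward. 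This gives $\gamma_t=\alpha_t$ and $\gamma_{j}=\alpha_j-x\alpha_{j+1}$, and at the bottom $-x\alpha_s=\gamma_{s-1}$. Solving recursively expresses the $\alpha$'s through the $\gamma$'s and yields the relation
\[
\sum_{j} x^{-(t-j)}\gamma_j=0 .
\]
The $f$-part gives the analogous relation for the $\delta$'s.

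The main obstacle is the step from these relations to $\gamma_j=\delta_j=0$, and it is the same delicate point as in Lemma~\ref{Lem1}. The element $\Delta(e_{m+1})$ is fixed, so I would take $\varphi_2$ once and for all, independent of $x$; then the $\gamma_j,\delta_j$ do not depend on $x$. The index range is the issue: the range $[s,t]$ coming from $\varphi_1$ may depend on $x$, but the support of $\Delta(e_{m+1})$ is a fixed finite set. I would therefore enlarge $[s-1,t]$ so that it contains this support; this is harmless, since the extra $\alpha_j$ are simply taken to be zero. The relation then becomes a fixed Laurent polynomial in $x^{-1}$ that vanishes for all $x\in\mathbb{C}^*$, so every coefficient is zero. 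Hence $\gamma_j=0$ for all $j$, and likewise $\delta_j=0$, which gives $\Delta(e_{m+1})=0$.

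For $b\neq-1$ the argument is immediate: by Theorem~\ref{thm52} there are no non-trivial $\frac12$-derivations, so $\varphi_2$ is a multiple of the identity. I expect that case to be treated separately in the paper.
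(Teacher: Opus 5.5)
Your proposal is correct and follows essentially the same route as the paper's proof. Both use the test element $e_{m+1}-xe_m$, compare coefficients separately on the $e$- and $f$-components via Theorem~\ref{thm52}, and conclude from a relation $\sum_j x^{j-t}\gamma_j=0$ holding for all $x\in\mathbb{C}^*$ that all $\gamma_j$ (and likewise $\delta_j$) vanish. Your observation that the $\gamma_j,\delta_j$ are fixed by $\Delta(e_{m+1})$, while the index range coming from $\varphi_1$ may vary with $x$, makes explicit a point the paper passes over; it causes no trouble, since the relation is $x^{-t}\sum_j\gamma_j x^{j}=0$ with $x^{-t}\neq 0$.
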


\begin{proof}
By Theorem \ref{thm52}, any $\frac{1}{2}$-derivation $\varphi$ on $\mathcal{W}(a,-1)$ satisfies
\[
\varphi(e_m)=\sum_{j\in\mathbb{Z}}\alpha_j e_{m+j}+\sum_{j\in\mathbb{Z}}\beta_j f_{m+j}.
\]
Hence, there exist integers $s\le t, \, p\le q$ such that
\[
\varphi(e_m)=\sum_{j=s}^{t}\alpha_j e_{m+j}+\sum_{j=p}^{q}\beta_j f_{m+j}.
\]

Consider the equality
\[
\varphi_1(e_{m+1})-x\varphi_1(e_m)
=\Delta(e_{m+1}-x e_m)
=\Delta(e_{m+1})
=\varphi_2(e_{m+1}),
\]
where $x\in\mathbb{C}^*$ and $\varphi_1,\varphi_2$ are $\frac{1}{2}$-derivations associated with $\Delta$.

Then
\[
\sum_{j=s}^{t}\alpha_j e_{m+1+j}
-
x\sum_{j=s}^{t}\alpha_j e_{m+j}+\sum_{j=p}^{q}\beta_j f_{m+1+j}
-
x\sum_{j=p}^{q}\beta_j f_{m+j}
=
\sum_{j=s'}^{t'}\alpha'_j e_{m+1+j}+\sum_{j=p'}^{q'}\beta'_j f_{m+1+j},
\]
where $s'=s-1$ and $t'=t$, $p'=p-1$ and $q'=q$.

Writing this equality with respect to the basis $\{e_k\}$, we obtain
\[
(-x\alpha_s-\alpha'_{s-1})e_{m+s}
+\sum_{j=s}^{t-1}(\alpha_j-x\alpha_{j+1}-\alpha'_j)e_{m+1+j}
+(\alpha_t-\alpha'_t)e_{m+1+t}=0.
\]

Consequently,
\[
\alpha'_t+x^{-1}\alpha'_{t-1}+x^{-2}\alpha'_{t-2}
+\cdots
+x^{-(t-s)}\alpha'_s
+x^{-(t-s+1)}\alpha'_{s-1}=0.
\]

Since this equality holds for all $x\in\mathbb{C}^*$ and the coefficients
$\alpha'_j$ $(s-1\le j\le t)$ are independent of $x$, it follows that
\[
\alpha'_j=0,\quad s-1\le j\le t.
\]
Similarly, we find that $\beta'_j=0$, where $p-1\le j\le q$.
Therefore, $\Delta(e_{m+1})=0$.
\end{proof}

\begin{lemma}\label{Lem200}
Let $\Delta$ be a local $\frac{1}{2}$-derivation on $\mathcal{W}(a,-1)$. If $\Delta(e_m)=0$, then $\Delta(e_{m-1})=0$ where $m\in\mathbb{Z}$.
\end{lemma}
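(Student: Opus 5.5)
We mirror the argument of Lemma~\ref{Lem100}, now shifting the index downward. By Theorem~\ref{thm52}, every $\frac12$-derivation $\varphi$ of $\mathcal{W}(a,-1)$ is determined by two finite families $\{\alpha_t\}$ and $\{\beta_t\}$. In particular, for every index $k$ we have
\[
\varphi(e_k)=\sum_{j=s}^{t}\alpha_j e_{k+j}+\sum_{j=p}^{q}\beta_j f_{k+j},
\]
where the coefficient families (and hence the ranges $s\le t$, $p\le q$) do not depend on $k$.

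Assume $\Delta(e_m)=0$ and fix $x\in\mathbb{C}^*$. Locality at the vector $e_{m-1}-xe_m$ gives a $\frac12$-derivation $\varphi_1$ (depending on $x$), and locality at $e_{m-1}$ gives a $\frac12$-derivation $\varphi_2$, such that
\[
\varphi_1(e_{m-1})-x\varphi_1(e_m)=\Delta(e_{m-1}-xe_m)=\Delta(e_{m-1})=\varphi_2(e_{m-1}).
\]
We write $\varphi_1$ with coefficients $\alpha_j,\beta_j$ and $\varphi_2$ with coefficients $\alpha'_j,\beta'_j$. Because $\varphi_1$ uses the same coefficients on $e_{m-1}$ and on $e_m$, the left-hand side equals
\[
\sum_{j=s}^{t}\alpha_j\bigl(e_{m-1+j}-xe_{m+j}\bigr)+\sum_{j=p}^{q}\beta_j\bigl(f_{m-1+j}-xf_{m+j}\bigr).
\]
The $e$-part of $\Delta(e_{m-1})=\varphi_2(e_{m-1})$ is therefore supported on $e_{m-1+s},\dots,e_{m+t}$. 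Write it as $\sum_{j=s}^{t+1}\alpha'_j e_{m-1+j}$, and define the $f$-part analogously.

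Next we compare coefficients of the $e$-basis. The lowest term gives $\alpha'_s=\alpha_s$, the highest gives $\alpha'_{t+1}=-x\alpha_t$, and the intermediate terms give $\alpha'_j=\alpha_j-x\alpha_{j-1}$. Solving these recursively from the bottom yields
\[
\alpha_k=\alpha'_k+x\alpha'_{k-1}+\dots+x^{k-s}\alpha'_s .
\]
Substituting into the top relation produces the polynomial identity
\[
\alpha'_{t+1}+x\alpha'_t+x^2\alpha'_{t-1}+\dots+x^{t-s+1}\alpha'_s=0 .
\]
The $\alpha'_j$ are the coefficients of $\Delta(e_{m-1})$, so they do not depend on $x$.

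The main obstacle is the window $[s,t]$: it comes from $\varphi_1$, which depends on $x$, so the identity above is not yet one fixed polynomial in $x$. We resolve this by fixing a window containing the finite support of the $x$-independent vector $\Delta(e_{m-1})$. For each $x$, the identity with the window enlarged to cover both this support and the support of $\varphi_1$ is a polynomial in $x$ of bounded degree whose coefficients are exactly the fixed numbers $\alpha'_j$; the extra coefficients introduced by enlarging the window are zero. Thus a single nonzero polynomial, if some $\alpha'_j\neq0$, would vanish at every $x\in\mathbb{C}^*$, which is impossible. Hence all $\alpha'_j=0$.

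The same computation applied to the $f$-components gives $\beta'_j=0$. Therefore $\Delta(e_{m-1})=0$. This argument is the exact analogue of Lemma~\ref{Lem100}, with the polynomial appearing in $x$ rather than in $x^{-1}$.
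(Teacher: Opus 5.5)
Your proposal is correct and follows the paper's approach. The paper proves this lemma only by saying it is similar to Lemma~\ref{Lem100}, and your argument is exactly that downward analogue: locality at $e_{m-1}-xe_m$ and at $e_{m-1}$, comparison of coefficients, and a polynomial identity in $x$ whose coefficients do not depend on $x$. You are also right to deal with the fact that the window of $\varphi_1$ depends on $x$, which the paper ignores. One phrase is imprecise: the degree is not ``bounded'', since the enlarged window still moves with $x$. The clean way to finish is to divide by the nonzero factor $x^{t+1}$, which turns the identity into $\sum_j\alpha'_j x^{-j}=0$; this is a single fixed Laurent polynomial, summed over the support of $\Delta(e_{m-1})$, vanishing on all of $\mathbb{C}^*$.
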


\begin{proof}
The proof is similar to that of Lemma~\ref{Lem100}.
\end{proof}

\begin{lemma}\label{300}
Let $\Delta$ be a local $\frac{1}{2}$-derivation on $\mathcal{W}(a,-1)$. If $\Delta(e_0)=0$, then
\[
\Delta(e_m)=0 \quad \text{for all } m\in\mathbb{Z}.
\]
\end{lemma}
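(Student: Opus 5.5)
This follows the proof of Lemma~\ref{witte_0} for the Witt algebra, now using the two propagation lemmas just established for $\mathcal{W}(a,-1)$. Assume $\Delta(e_0)=0$. We first propagate to the right. Applying Lemma~\ref{Lem100} with $m=0$ gives $\Delta(e_1)=0$. Inductively, if $\Delta(e_k)=0$ for some $k\ge 0$, then Lemma~\ref{Lem100} with $m=k$ gives $\Delta(e_{k+1})=0$. Hence $\Delta(e_m)=0$ for all $m>0$.

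Next we propagate to the left in the same way. Lemma~\ref{Lem200} with $m=0$ yields $\Delta(e_{-1})=0$. Downward induction, using Lemma~\ref{Lem200} with $m=-k$, then yields $\Delta(e_{-k-1})=0$ for every $k\ge 0$, so $\Delta(e_m)=0$ for all $m<0$. Combining the two directions with the hypothesis gives $\Delta(e_m)=0$ for every $m\in\mathbb{Z}$.

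There is no real obstacle here: the substance lies in Lemmas~\ref{Lem100} and~\ref{Lem200}, which are assumed. The only point to check is that their hypotheses are exactly ``$\Delta(e_m)=0$'' for an arbitrary integer $m$. This holds, so the induction is legitimate in both directions.

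Note that this lemma says nothing about the values $\Delta(f_j)$. Those will have to be handled separately, presumably by an analogous propagation argument on the $f_j$ using the form $\varphi(f_i)=\sum_t\alpha_t f_{i+t}$ from Theorem~\ref{thm52}. That argument is not needed for the present statement.
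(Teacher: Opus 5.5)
Your proposal is correct and is exactly the paper's argument. Starting from $\Delta(e_0)=0$, you induct upward with Lemma~\ref{Lem100} to get $\Delta(e_m)=0$ for $m>0$, and downward with Lemma~\ref{Lem200} to get it for $m<0$.
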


\begin{proof}
Assume $\Delta(e_0)=0$. By Lemma~\ref{Lem100}, using mathematical induction, we obtain
\[
\Delta(e_m)=0 \quad \text{for all } m>0.
\]

Similarly, applying Lemma~\ref{Lem200} yields
\[
\Delta(e_m)=0 \quad \text{for all } m<0.
\]

Hence, $\Delta(e_m)=0$ for all $m\in\mathbb{Z}$.
\end{proof}

\begin{lemma}\label{Lem400}
    Let $\Delta$ be a local $\frac{1}{2}$-derivation on $\mathcal{W}(a,-1)$. If $\Delta(e_m)=0$ for any $m \in \mathbb{Z}$, then
\[
\Delta(f_m)=0 \quad \text{for all } m\in\mathbb{Z}.
\]
\end{lemma}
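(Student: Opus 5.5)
Assume $\Delta(e_m)=0$ for all $m\in\mathbb{Z}$ and fix $n\in\mathbb{Z}$; we must show $\Delta(f_n)=0$. The idea is to copy Lemma~\ref{Lem100}: perturb $f_n$ by a multiple of a basis vector on which $\Delta$ vanishes, and compare supports. By Theorem~\ref{thm52}, every $\frac12$-derivation $\psi$ of $\mathcal{W}(a,-1)$ satisfies
\[
\psi(f_n)=\sum_{t}\gamma_t f_{n+t},\qquad \psi(e_k)=\sum_{t}\gamma_t e_{k+t}+\sum_t\delta_t f_{k+t},
\]
with the \emph{same} finite family $\{\gamma_t\}$ in both formulas. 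In particular, $\Delta(f_n)=\psi_0(f_n)$ lies in $\mathrm{span}\{f_j\}$ for a suitable $\psi_0$.

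Consider $y=f_n+xe_{k}$ for $x\in\mathbb{C}^*$ and an integer $k$ to be chosen. Since $\Delta$ is linear and $\Delta(e_k)=0$, we get $\Delta(f_n)=\Delta(y)=\psi_x(y)$ for some $\frac12$-derivation $\psi_x$ with coefficients $\gamma^{(x)}_t,\delta^{(x)}_t$. Then
\[
\Delta(f_n)=\sum_t\gamma^{(x)}_t f_{n+t}+x\sum_t\gamma^{(x)}_t e_{k+t}+x\sum_t\delta^{(x)}_t f_{k+t}.
\]
The left side has no $e$-component, so comparing $e$-components and using $x\neq0$ gives $\gamma^{(x)}_t=0$ for all $t$. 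This is the key point: the $e$-part of $\psi_x(e_k)$ is governed by the same $\gamma$'s that govern $\psi_x(f_n)$. Hence
\[
\Delta(f_n)=x\sum_t\delta^{(x)}_t f_{k+t}.
\]
This alone does not force $\Delta(f_n)=0$, because the $\delta_t$ are arbitrary.

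To finish, I would use the $\gamma=0$ constraint directly on the single element $f_n$. Take $y=f_n+xe_n$. The argument above shows that there is a $\frac12$-derivation $\psi$ with all $\gamma_t=0$ such that $\Delta(f_n)=\psi(f_n+xe_n)$. Such a $\psi$ kills every $f_j$ and sends each $e_k$ into the $f$-part. So $\Delta(f_n)=x\,\psi(e_n)$, which is not obviously zero.

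The main obstacle is therefore excluding a nonzero $f$-valued component, since $\Delta(f_n)$ could a priori equal $\psi(e_n)$-type vectors. My plan is to exploit linearity over two different elements. For $x\neq x'$, the values $\Delta(f_n)=x\,\psi_x(e_n)=x'\psi_{x'}(e_n)$ must coincide. Combining this with $\Delta(f_n)=\psi_0(f_n)=\sum_t\gamma^{(0)}_tf_{n+t}$, I will also use $y=f_n+xe_m$ for many $m$ simultaneously, and compare with the local derivation at $f_n+\sum_i x_ie_{m_i}$. The hope is that the resulting finite-support identity in the variable $x$, arranged as in Lemma~\ref{Lem100}, forces the coefficients to vanish. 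If this fails, I expect to use that $\Delta$ agrees with a global $\frac12$-derivation on $f_n$ and on all $e_m$ at once, which would need the local condition for sums. That is exactly the delicate point I would scrutinize first.
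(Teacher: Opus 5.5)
Your set-up is correct and matches the paper's starting point: you perturb $f_n$ by $e$'s on which $\Delta$ vanishes, and you use that the $e$-part of $\psi(e_k)$ and $\psi(f_n)$ are governed by the same $\gamma$'s. But the proposal stops exactly where the lemma's content begins. As you observe, a perturbation by a single $e_k$ can never constrain $\Delta(f_n)$. As the finitely supported $\delta$ varies, $x\sum_t\delta_t f_{k+t}$ runs over all of $\mathrm{span}\{f_j\}$. Comparing different $x$, or different single $e_m$, yields nothing, because the $\psi_x$ are unrelated to one another. The closing step is left as a ``hope''. The fallback you mention, agreement of $\Delta$ with one global $\frac12$-derivation on $f_n$ and all $e_m$ simultaneously, is not something locality provides.

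The missing idea is to perturb by \emph{two} $e$'s whose distance exceeds the width of the support of the fixed vector $\Delta(f_n)$. Write $\Delta(f_n)=\sum_{j=p'}^{q'}c_jf_{n+j}$ and take $y=f_n+e_n+e_k$ with $k=n+q'-p'+1$; this is the paper's choice. Locality gives $\psi$ with $\Delta(f_n)=\Delta(y)=\psi(y)$.

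The $e$-component gives $\sum_t\gamma_t(e_{n+t}+e_{k+t})=0$, and the coefficient at the lowest index forces $\gamma=0$. Hence $\Delta(f_n)=\sum_t\delta_t(f_{n+t}+f_{k+t})$. Suppose $\delta\neq0$, with extreme indices $t_0\le t_1$. Then the right-hand side has the nonzero coefficient $\delta_{t_0}$ at $f_{n+t_0}$ and $\delta_{t_1}$ at $f_{k+t_1}$. So its support has width at least $k-n>q'-p'$, which is impossible. Thus $\delta=0$ and $\Delta(f_n)=0$.

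Equivalently, in the spirit of Lemma~\ref{Lem100}, take $y=f_n+xe_n+e_{n+1}$. The same computation shows that the Laurent polynomial $\sum_j c_jz^{j}$ is divisible by $z+x$ for every $x\in\mathbb{C}^*$, hence vanishes.
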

\begin{proof}
The $\frac{1}{2}$-derivation of the algebra $\mathcal{W}(a,-1)$ is written as follows by finding integers $s \le t$ and $p \le q$ according to Theorem \ref{thm52}
\[
\varphi(e_m)=\sum_{j=s}^{t}\alpha_j e_{m+j}+\sum_{j=p}^{q}\beta_j f_{m+j},
\]
\[
\varphi(f_m)=\sum_{j=s}^{t}\alpha_j f_{m+j}.
\]
Now, we calculate the value of the local $\frac12$-derivation at $f_m$. There exist integers $p' \le q'$ such that the $\frac12$-derivation $\varphi_1$ is given by
\[
\Delta(f_m)=\varphi_1(f_m)=\sum_{j=p'}^{q'}\beta'_j f_{m+j}.
\]

Consider the equality
\[
\varphi(f_m)+\varphi(e_m)+\varphi(e_k)
=\Delta(f_m+e_m+e_k)
=\Delta(f_m)
=\varphi_1(f_m),
\]
where $k=q'-p'+m+1$ and $\varphi_1,\varphi_2$ are $\frac{1}{2}$-derivations associated with $\Delta$.

Then
\begin{equation}\label{efm}
  \sum_{j=s}^{t} \alpha_j f_{m+j} + \sum_{j=s}^{t} \alpha_j e_{m+j} + \sum_{j=p}^{q} \beta_j f_{m+j} + \sum_{j=s}^{t} \alpha_j e_{k+j} + \sum_{j=p}^{q} \beta_j f_{k+j} = \sum_{j=p'}^{q'} \beta'_j f_{m+j}.
\end{equation}

Assume that $\alpha_s \neq 0$ and $\alpha_t \neq 0$, and consider the individual basis elements $e_j, j \in \mathbb{Z}$ in the final equation above
\[
 \sum_{j=s}^{t} \alpha_j e_{m+j} + \sum_{j=s}^{t} \alpha_j e_{k+j}  = 0.
\]
The second sum in this equation represents a shift of the first sum by $q' - p' + 1 > 0$. From this, we deduce that $\alpha_s = 0$ and $\alpha_t = 0$, which is a contradiction. Consequently, it must hold that $\alpha_j = 0$ for all $s \le j \le t$. Substituting this result into equation \eqref{efm}, we obtain the following
\[
  \sum_{j=p}^{q} \beta_j f_{m+j} +  \sum_{j=p}^{q} \beta_j f_{k+j} = \sum_{j=p'}^{q'} \beta'_j f_{m+j}
\]
For this equality to hold, the lower bound of the first sum and the upper bound of the second sum on the left-hand side must coincide respectively with the lower and upper bounds of the sum on the right-hand side. This implies that $p = p'$ and $q = p' - 1$, which is a contradiction. Therefore, we conclude that $\beta'_j = 0$ for all $p' \le j \le q'$.
\end{proof}

\textit{Proof of Theorem \ref{locwitt(a,b)}}. Let $\Delta$ be any local $\frac12$-derivation on $\mathcal{W}(a,-1)$. Then there exists a $\frac12$-derivation $\varphi$ on $\mathcal{W}(a,-1)$ such that
$$\Delta(e_0)=\varphi(e_0)$$

Set $\Delta_1=\Delta-\varphi$. Then $\Delta_1$ is a local $\frac12$-derivation such that $\Delta_1(e_0)=0$.
By Lemma \ref{300}, $\Delta_1(e_m)=0$ for all $m\in\mathbb{Z}$ and by Lemma \ref{Lem400}, $\Delta_1(f_m)=0$ for all $m\in\mathbb{Z}$. Then $\Delta_1\equiv0.$
Thus $\Delta=\varphi$ is a $\frac12$-derivation.

Let $\Delta$ be any local $\frac12$-derivation on $\mathcal{W}(a,b)$ for $b \neq -1$. According to Theorem \ref{thm52}, the local $\frac{1}{2}$-derivation is trivial. Therefore, $\Delta$ is a rivial
$\frac{1}{2}$-derivation.  \hfill $\square$

\subsection{Local $\frac12$-derivations of thin Lie algebras}

Let us consider the following (see \cite{Kha}) so-called {\it thin Lie algebra} $\mathcal{L}$
with a basis \(\{e_n: n\in \mathbb{N}\}\), which is defined by the
following table of multiplications of the basis elements:
 $$[e_1,e_n]=e_{n+1},\ \ \ n\geq 2.$$
 and other products of the basis elements being zero.

\begin{lemma}\label{thin}
    Let $\varphi$ be a $\frac12$-derivation on $\mathcal L$.  Then
    \begin{equation*}
        \begin{split}
            \varphi(e_1)&=\sum\limits_{i=1}^n\alpha_ie_i,\quad \varphi(e_2)=\sum\limits_{i=2}^{m}\beta_ie_i,\\
   \varphi(e_j)&=((1-2^{2-j})\alpha_1+2^{2-j}\beta_2)e_j+2^{2-j}\sum\limits_{i=3}^{m}\beta_{i}e_{i+j-2},\ \ \ j\geq3.
        \end{split}
    \end{equation*}
where $n,m\in\mathbb{N}$ and $\alpha=(\alpha_1,...,\alpha_n)\in\mathbb{F}^n, \ \beta=(\beta_1,....,\beta_m)\in\mathbb{F}^m.$
   \end{lemma}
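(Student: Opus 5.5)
We determine $\varphi$ on the generators $e_1,e_2$ and then propagate to all $e_j$ using the $\frac12$-derivation identity, since $e_{j+1}=[e_1,e_j]$ for $j\ge2$. Since $\varphi(e_1)$ and $\varphi(e_2)$ are elements of $\mathcal L$, they are finite linear combinations. So we may write $\varphi(e_1)=\sum_{i=1}^n\alpha_ie_i$ and $\varphi(e_2)=\sum_{i=1}^m\beta_ie_i$. For $j\ge3$ we write $\varphi(e_j)=\sum_i\gamma_{j,i}e_i$ with finitely many nonzero $\gamma_{j,i}$. The key facts about the bracket are:
\begin{itemize}
\item for $x=\sum x_ie_i$ and $k\ge2$ we have $[x,e_k]=x_1e_{k+1}$;
\item for any $x$ we have $[e_1,x]=\sum_{i\ge2}x_ie_{i+1}$.
\end{itemize}

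\textbf{Step 1: vanishing of the $e_1$-components.} Apply the identity to the pair $(e_2,e_j)$ with $j\ge3$. Since $[e_2,e_j]=0$, we get
$$0=[\varphi(e_2),e_j]+[e_2,\varphi(e_j)]=\beta_1e_{j+1}-\gamma_{j,1}e_3.$$
Because $j+1\ne3$, both coefficients must vanish, so $\beta_1=0$ and $\gamma_{j,1}=0$ for all $j\ge3$. This explains why the sum for $\varphi(e_2)$ in the statement starts at $i=2$. The pairs $(e_i,e_j)$ with $i,j\ge2$ then impose no further conditions, because all the relevant $e_1$-components are zero.

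\textbf{Step 2: recurrence.} Apply the identity to the pair $(e_1,e_j)$ with $j\ge2$. By the bracket facts above,
$$2\varphi(e_{j+1})=[\varphi(e_1),e_j]+[e_1,\varphi(e_j)]=\alpha_1e_{j+1}+\sum_{i\ge2}\gamma_{j,i}e_{i+1}.$$
Here we set $\gamma_{2,i}=\beta_i$. Thus $\varphi(e_{j+1})$ is obtained by adding $\alpha_1e_{j+1}$ to the shift $e_i\mapsto e_{i+1}$ of $\varphi(e_j)$ and then halving. In particular this again has no $e_1$-component, which is consistent with Step 1.

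\textbf{Step 3: induction on $j\ge3$.} Starting from $\varphi(e_2)=\sum_{i=2}^m\beta_ie_i$, the term $\beta_ie_i$ with $i\ge3$ is shifted $j-2$ times and halved each time, contributing $2^{2-j}\beta_ie_{i+j-2}$. The diagonal coefficient $c_j$ of $e_j$ satisfies $c_2=\beta_2$ and $c_{j+1}=\tfrac12(\alpha_1+c_j)$. Solving this affine recurrence gives
$$c_j=(1-2^{2-j})\alpha_1+2^{2-j}\beta_2 .$$
One has to check that the diagonal term never collides with a shifted term $e_{i+j-2}$ for $i\ge3$; this holds because $i+j-2>j$. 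This yields exactly the formula of Lemma \ref{thin}, with $\alpha_2,\dots,\alpha_n$ left free since they never appear in any constraint.

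I expect the only real obstacle to be Step 1, namely noticing that the pair $(e_2,e_j)$ with $j\ge3$ is what forces $\beta_1=0$, since the pair $(e_2,e_2)$ is vacuous. The rest is careful bookkeeping of indices in the induction.
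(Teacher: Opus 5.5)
Your proof is correct and follows essentially the same route as the paper. Both use the pair $(e_1,e_j)$ to get the recurrence and a pair $(e_2,\cdot)$ to force $\beta_1=0$; the differences are cosmetic. The paper first computes $\varphi(e_3)$, which has no $e_1$-component, and then uses $(e_2,e_3)$. You instead use $(e_2,e_j)$ with an unknown $\gamma_{j,1}$, which the recurrence would make vanish anyway, and you solve the affine recurrence for the diagonal coefficient explicitly rather than only checking the induction step.
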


\begin{proof}  Let $\varphi$ be a $\frac12$-derivation of $\mathcal{L}.$  We set
\[\varphi(e_1)=\sum\limits_{i=1}^n\alpha_ie_i,\ \ \varphi(e_2)=\sum\limits_{i=1}^m\beta_ie_i,\ n,m\in\mathbb{N}.\]
We have
\begin{equation*}\begin{split}
\varphi(e_3)&=\varphi([e_1,e_2])=\frac{1}{2}([\varphi(e_1),e_2]+[e_1,\varphi(e_2)])\\
&=\frac{1}{2}
\left(\left[\sum\limits_{i=1}^n\alpha_ie_i,e_2\right]+\left[e_1,\sum\limits_{i=1}^{m}\beta_ie_i\right]\right)
=\frac12(\alpha_1+\beta_2)e_3+\frac12\sum\limits_{i=3}^{m}\beta_{i}e_{i+1}.
\end{split}\end{equation*}
Using $[e_2,e_3]=0,$ we have
\begin{equation*}\begin{split}
0=\varphi([e_2,e_3])&=\frac{1}{2}([\varphi(e_2),e_3]+[e_2, \varphi(e_3)])=\frac12\left(\left[\sum\limits_{i=1}^m\beta_ie_i,e_3 \right]\right)\\
&+\frac12\left(\left[e_2,\frac12(\alpha_1+\beta_2)e_3+\frac12\sum\limits_{i=1}^m\beta_{i+2}e_{i+3}\right]\right)=\frac12\beta_1e_4,
\end{split}\end{equation*}
Thus $\beta_1=0.$

Let  $$\varphi(e_j)=((1-2^{2-j})\alpha_1+2^{2-j}\beta_2)e_j+2^{2-j}\sum\limits_{i=3}^{m}\beta_{i}e_{i+j-2}, \ j\geq3,$$
 and
\begin{equation*}\begin{split}
\varphi(e_{j+1})&=\varphi([e_1,e_j])=\frac12([\varphi(e_1),e_j]+[e_1,\varphi(e_j)])\\
&=\frac12\left(\left[\sum\limits_{i=1}^n\alpha_ie_i, e_j\right]+\left[e_1,((1-2^{2-j})\alpha_1+2^{2-j}\beta_2)e_j+2^{2-j}\sum\limits_{i=3}^{m}\beta_{i}e_{i+j-2}\right]\right)\\
&=((1-2^{2-(j+1)})\alpha_1+2^{2-(j+1)}\beta_2)e_{j+1}+2^{2-(j+1)}\sum\limits_{i=3}^{m}\beta_{i}e_{i+(j+1)-2}.
\end{split}\end{equation*}

This means that the induction has been carried out.
\end{proof}
Let
$\alpha=(\alpha_1,\alpha_2,\dots)$ and $\beta=(\beta_1,\beta_2,\dots)$   be an arbitrary sequence from the field $\mathbb{F}$.
Define the operator
\[
D_{\alpha,\beta}:\mathcal L  \to \mathcal L
\]
which is a $\frac12$-derivation of $\mathcal L$, given on the basis elements by
\begin{equation*}
        \begin{split}
            D_{\alpha,\beta}(e_1)&=\sum\limits_{i=1}^n\alpha_ie_i,\quad D_{\alpha,\beta}(e_2)=\sum\limits_{i=2}^{m}\beta_ie_i,\quad n,m \in \mathbb{N},\\
   D_{\alpha,\beta}(e_j)&=((1-2^{2-j})\alpha_1+2^{2-j}\beta_2)e_j+2^{2-j}\sum\limits_{i=3}^{m}\beta_{i}e_{i+j-2},\ \ \ j\geq3.
        \end{split}
    \end{equation*}

We also define the operator $\Delta:\mathcal L\to\mathcal L$ by
\[
\Delta(e_1)=\Delta(e_2)=0,
\qquad
\Delta(e_j)=(1-2^{2-j})e_j,\quad j\ge3.
\]

\begin{theorem}
Let  $\mathcal L$ be the thin Lie algebra. Then $\mathcal{L}$ admits a local $\frac12$-derivation which is not a
$\frac12$-derivation.
\end{theorem}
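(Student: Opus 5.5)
We will verify two things about the operator $\Delta$ defined just before the statement: that it is not a $\frac12$-derivation, and that it is a local $\frac12$-derivation. Throughout, we use the family $D_{\alpha,\beta}$, which by Lemma \ref{thin} and the definition preceding the theorem consists of $\frac12$-derivations of $\mathcal L$. If needed, this can be double-checked directly. For $i,j\geq 2$, neither $D_{\alpha,\beta}(e_i)$ nor $D_{\alpha,\beta}(e_j)$ has an $e_1$-component, so both sides of the defining identity vanish. The pairs $(e_1,e_j)$ are exactly the recursion used in the proof of Lemma \ref{thin}.

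\emph{Step 1: $\Delta$ is not a $\frac12$-derivation.} We simply evaluate the defining identity on the pair $(e_1,e_2)$. On one side,
$$\Delta([e_1,e_2])=\Delta(e_3)=(1-2^{-1})e_3=\tfrac12 e_3\neq 0 .$$
On the other side,
$$\tfrac12\big([\Delta(e_1),e_2]+[e_1,\Delta(e_2)]\big)=0,$$
since $\Delta(e_1)=\Delta(e_2)=0$. (Equivalently, by Lemma \ref{thin}, a $\frac12$-derivation vanishing on $e_1,e_2$ has $\alpha_1=\beta_2=0$ and all $\beta_i=0$, hence vanishes identically; but $\Delta(e_3)\neq0$.)

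\emph{Step 2: $\Delta$ is a local $\frac12$-derivation.} Fix an arbitrary element $x=\sum_{j=1}^{N}c_je_j$. We must produce a $\frac12$-derivation $D$ with $D(x)=\Delta(x)$, where
$$\Delta(x)=\sum_{j\ge3}c_j(1-2^{2-j})e_j .$$
We split into two cases according to $c_1$.

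\emph{Case $c_1=0$.} Take $D=D_{\alpha,\beta}$ with $\alpha_1=1$ and all other $\alpha_i$, $\beta_i$ equal to $0$. Then $D(e_1)=e_1$, $D(e_2)=0$, and $D(e_j)=(1-2^{2-j})e_j$ for $j\ge3$. So $D$ agrees with $\Delta$ on every $e_j$ with $j\ge2$, and hence $D(x)=\Delta(x)$ because $c_1=0$.

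\emph{Case $c_1\neq0$.} Take $\beta\equiv0$, $\alpha_1=0$, $\alpha_2=0$, and $\alpha_i=c_1^{-1}c_i(1-2^{2-i})$ for $3\le i\le N$ (and $\alpha_i=0$ otherwise). Then $D_{\alpha,\beta}(e_j)=0$ for all $j\ge2$, since its coefficients involve only $\alpha_1$, $\beta_2$ and $\beta_i$, which all vanish. Consequently
$$D_{\alpha,\beta}(x)=c_1\sum_{i}\alpha_ie_i=\sum_{i\ge3}c_i(1-2^{2-i})e_i=\Delta(x).$$

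The only real obstacle was the case $c_1\neq0$. There the naive choice from the first case leaves an unwanted term $c_1e_1$. The key observation is that the free coefficients $\alpha_i$ with $i\ge2$ in $D(e_1)$ are unconstrained by Lemma \ref{thin}, so the whole target vector can be absorbed into $c_1D(e_1)$ while $D$ kills $e_j$ for all $j\ge2$.
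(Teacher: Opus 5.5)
Your proposal is correct and follows essentially the same route as the paper. It uses the same operator $\Delta$ and the same two-case choice of $D_{\alpha,\beta}$: $\alpha=(1,0,0,\dots)$, $\beta=0$ when $x_1=0$, and $\beta=0$, $\alpha_1=\alpha_2=0$, $\alpha_i=(1-2^{2-i})x_i/x_1$ when $x_1\neq0$.

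The differences are minor. You refute the $\frac12$-derivation identity on the pair $(e_1,e_2)$ instead of the paper's $(e_1,e_3)$. You also explicitly check that every $D_{\alpha,\beta}$ really is a $\frac12$-derivation, which the paper only asserts; Lemma~\ref{thin} is a necessity statement and does not give this on its own.
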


\begin{proof}
Now let us define a linear operator $\Delta$ on $\mathcal L$ by
\begin{equation*}\label{locthin}
\Delta(e_1)=\Delta(e_2)=0,\quad \Delta(e_j)=(1-2^{2-j})e_j,\ j\geq 3.
\end{equation*}

We shall show that $\Delta$ is a local $\frac12$-derivation of $\mathcal L$, which is not a $\frac12$-derivation.

Firstly, we show that $\Delta$ is not a  $\frac12$-derivation. Let’s assume the opposite, $\Delta$ is a $\frac12$-derivation.

 Let us take $e_1$ and $e_3$ in $\mathcal L$.
Consider
$$\frac12\left([{\Delta}(e_1),e_3]+[e_1,{\Delta}(e_3)]\right)=\frac12\left[e_1,\frac12e_3\right]=\frac{1}{4}e_4.$$

On the other hand,

$${\Delta}([e_1,e_3])={\Delta}(e_4)=\frac{3}{4}e_4.$$

Comparing coefficients at the basis elements we obtain
$${\Delta}([e_1,e_3])\neq\frac{1}2\left([{\Delta}(e_1),e_3]+[e_1,{\Delta}(e_3)]\right)$$

So ${\Delta}$ is not a  $\frac12$-derivation.

Now we show that the $\Delta$ is a local $\frac12$-derivation.

 Let $x=\sum\limits_{i\in I}x_i e_i$ be an arbitrary element of $\mathcal L$,
where $x_i\neq0$ only for $i\in I\subset\mathbb{N}$. We shall find a $\frac12$-derivation $\varphi$, such that
 $\Delta(x)=\varphi(x)$.

\medskip
\noindent
\textbf{Case 1.} If $1\notin I$, put $\alpha=(1,0,0,\cdots)$ and $\beta=(0,0,\cdots).$ Then
\[
\Delta(x)= \sum_{i\in I}(1-2^{1-i})e_i = D_{\alpha,\beta}(x).
\]

\medskip
\noindent
\textbf{Case 2.} If $1\in I$, define
\[
\alpha(x)=\left(0,0,
\frac{(1-2^2)x_3}{x_1},\dots,\frac{\left(1-2^{1-k}\right)x_k}{x_1},\cdots\right),
\]
and
$$\beta=(0,0,\cdots).$$
Then
\[
\Delta(x)= \sum_{i\in I\setminus\{1,2\}}(1-2^{2-i})x_ie_i = D_{\alpha,\beta}(x).
\]
\end{proof}

\section{2-local $\frac12$-derivations of infinite-dimensional Lie algebras}

In this section, we study 2-local $\frac12$-derivations of infinite-dimensional Lie algebras.

\subsection{2-Local $\frac12$-derivations of Witt algebras}

Now we shall give the main result concerning 2-local $\frac12$-derivations of Witt algebras.

\begin{theorem}\label{Witt}
Any $2$-local $\frac{1}{2}$-derivation of Witt algebra $\mathcal{W}$ is a $\frac{1}{2}$-derivation.
\end{theorem}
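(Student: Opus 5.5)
We will follow the standard scheme for 2-local maps. By Theorem \ref{BKL}, every $\frac12$-derivation of $\mathcal{W}$ has the form $\varphi_\alpha(e_i)=\sum_{j}\alpha_j e_{i+j}$ for a finitely supported family $\alpha=\{\alpha_j\}_{j\in\mathbb{Z}}$. Thus $\varphi_\alpha$ is determined by $\alpha$, and $\alpha$ is determined by the single value $\varphi_\alpha(e_0)=\sum_j\alpha_je_j$. The key observation is therefore the following: if two $\frac12$-derivations agree on $e_0$, they coincide.

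Let $\nabla$ be a 2-local $\frac12$-derivation. Choose the $\frac12$-derivation $\varphi=\varphi_{e_0,e_0}$ with $\nabla(e_0)=\varphi(e_0)$ and set $\nabla_1=\nabla-\varphi$. For any pair $x,y$, the map $\varphi_{x,y}-\varphi$ is again a $\frac12$-derivation, so $\nabla_1$ is again a 2-local $\frac12$-derivation, and $\nabla_1(e_0)=0$. It remains to prove that $\nabla_1\equiv0$.

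Fix an arbitrary $x\in\mathcal{W}$ and take $\psi=\varphi_{e_0,x}$ for $\nabla_1$. Then $\psi(e_0)=\nabla_1(e_0)=0$. Writing $\psi=\varphi_\alpha$, we get $\sum_j\alpha_je_j=0$, so every $\alpha_j$ vanishes and $\psi=0$. Hence $\nabla_1(x)=\psi(x)=0$. Since $x$ was arbitrary, $\nabla_1\equiv0$ and $\nabla=\varphi$ is a (linear) $\frac12$-derivation.

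The only step requiring care is the injectivity claim: that $\varphi\mapsto\varphi(e_0)$ is injective on $\operatorname{Der}_{\frac12}(\mathcal{W})$. This follows directly from the explicit form in Theorem \ref{BKL}, since the coefficient of $e_j$ in $\varphi(e_0)$ is exactly $\alpha_j$. Apart from this, the argument only uses the facts that $\operatorname{Der}_{\frac12}(\mathcal{W})$ is a vector space and that the same $\varphi_{e_0,x}$ works for both $e_0$ and $x$. In particular, no linearity of $\nabla$ is needed in advance; it is obtained as a consequence.
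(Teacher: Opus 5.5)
Your proof is correct and is essentially the paper's argument: subtract a $\frac12$-derivation agreeing with $\nabla$ at $e_0$, then use Theorem \ref{BKL} to see that any $\frac12$-derivation vanishing at $e_0$ is zero, so the reduced 2-local map vanishes identically. Fixing $\varphi=\varphi_{e_0,e_0}$ once, as you do, is a slightly cleaner way to write the reduction step than the paper's $\varphi_{e_0,x}$, where the role of $x$ is left implicit.
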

\begin{proof} Let $\nabla$ be a $2$-local $\frac{1}{2}$-derivation on $\mathcal{W},$ such that $\nabla(e_0)=0.$
Then for any element $x=\sum\limits_{j\in\mathbb{Z}}x_je_j\in \mathcal{W},$ there exists a $\frac{1}{2}$-derivation $\varphi_{e_0,x}(x)$, such that
$$\nabla(e_0)=\varphi_{e_0,x}(e_0),\quad \nabla(x)=\varphi_{e_0,x}(x).$$

Hence,
$$0=\nabla(e_0)=\varphi_{e_0,x}(e_0)=\sum\limits_{j\in\mathbb{Z}}\alpha_je_{j},$$
which implies,  $\alpha_j=0,\ j\in\mathbb{Z}.$

Consequently, from the description of the $\frac{1}{2}$-derivation $\mathcal{W},$ we conclude that
$\varphi_{e_0,x}=0.$
Thus, we obtain that if $\nabla(e_0)=0,$ then
$
\nabla\equiv0.
$

Let now $\nabla$ be an arbitrary $2$-local $\frac{1}{2}$-derivation of \(\mathcal{W}\).
Take a $\frac{1}{2}$-derivation $\varphi_{e_0,x},$ such that
\begin{equation*}
\nabla(e_0)=\varphi_{e_0,x}(e_0)\ \ \text{and} \ \ \nabla(x)=\varphi_{e_0,x}(x).
\end{equation*}

Set $\nabla_1=\nabla-\varphi_{e_0,x}.$ Then $\nabla_1$ is a $2$-local
$\frac{1}{2}$-derivation, such that $\nabla_1(e_0)=0.$ Hence $\nabla_1(x)=0$ for all $x\in \mathcal{W},$  which implies $\nabla=\varphi_{e_0,x}.$
Therefore, $\nabla$ is a
$\frac{1}{2}$-derivation.
\end{proof}

\subsection{2-Local $\frac12$-derivations on subalgebras of the Witt algebra}

In this subsection, we study 2-local $\frac12$-derivations on positive Witt and one-sided Witt algebras.

\begin{theorem}\label{PWittloc}
Any 2-local $\frac12$-derivation of $\mathcal{W}^+$ and $\mathcal{W}_1$ is a $\frac12$-derivation.
\end{theorem}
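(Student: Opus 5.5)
We follow the scheme of the proof of Theorem \ref{Witt}, with the descriptions of Theorem \ref{onewitt} and Theorem \ref{prop:half-der-Wplus-generators} replacing Theorem \ref{BKL}. We treat $\mathcal{W}_1$ in detail; the argument for $\mathcal{W}^+$ is identical once the reference element is changed.

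The key observation is that every $\frac12$-derivation of $\mathcal{W}_1$ has the form $\varphi=\sum_{j\ge0}\alpha_jT_j$, so it is completely determined by its value on one basis element. Taking $e_0$ as the reference element, $\varphi(e_0)=\sum_{j\ge0}\alpha_je_j$. Thus $\varphi(e_0)=0$ forces $\alpha_j=0$ for all $j\ge0$, hence $\varphi=0$. For $\mathcal{W}^+$ we use $e_1$ instead. By Theorem \ref{prop:half-der-Wplus-generators}, $\varphi(e_1)=\sum_{k\ge1}\alpha_ke_k$ records every coefficient $\alpha_k$ exactly once, so $\varphi(e_1)=0$ implies $\varphi=0$. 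In both cases the injectivity of $\varphi\mapsto\varphi(e_0)$ (respectively $\varphi\mapsto\varphi(e_1)$) on the space of $\frac12$-derivations is read off directly from the theorems.

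\emph{Step 1: a 2-local $\frac12$-derivation vanishing on the reference element is zero.} Let $\nabla$ be a 2-local $\frac12$-derivation with $\nabla(e_0)=0$ (respectively $\nabla(e_1)=0$), and let $x$ be arbitrary. Choose $\varphi_{e_0,x}$ with $\nabla(e_0)=\varphi_{e_0,x}(e_0)$ and $\nabla(x)=\varphi_{e_0,x}(x)$. Then $\varphi_{e_0,x}(e_0)=0$, so $\varphi_{e_0,x}=0$ by the injectivity above, and therefore $\nabla(x)=0$.

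\emph{Step 2: reduction of a general $\nabla$.} For an arbitrary $\nabla$, fix a single $\frac12$-derivation $\psi$ with $\psi(e_0)=\nabla(e_0)$; for instance $\psi=\varphi_{e_0,e_0}$. Set $\nabla_1=\nabla-\psi$. This is again a 2-local $\frac12$-derivation: for any pair $x,y$ the $\frac12$-derivation $\varphi_{x,y}-\psi$ witnesses the condition. Moreover $\nabla_1(e_0)=0$. Step 1 then gives $\nabla_1\equiv0$, so $\nabla=\psi$ is a $\frac12$-derivation.

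\emph{Main point of care.} In Step 2 the subtracted derivation $\psi$ must be fixed once and not allowed to depend on $x$. Otherwise $\nabla_1$ is not a well-defined 2-local map, and Step 1 cannot be applied to it. A second point is that Theorems \ref{onewitt} and \ref{prop:half-der-Wplus-generators} permit infinite formal sums of the $T_k$. The evaluation argument in Step 1 is unaffected, since it only compares coefficients in $\varphi(e_0)$, each $\alpha_j$ appearing exactly once. Apart from these two points the proof is routine.
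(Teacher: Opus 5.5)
Your proposal is correct and follows the paper's proof. Both rest on injectivity of $\varphi\mapsto\varphi(e_1)$ on $\mathcal{W}^+$ (respectively $\varphi\mapsto\varphi(e_0)$ on $\mathcal{W}_1$), read off from Theorems \ref{prop:half-der-Wplus-generators} and \ref{onewitt}, followed by subtracting one $\frac12$-derivation that agrees with $\nabla$ at the reference element. Your requirement that the subtracted $\psi$ be fixed independently of $x$ makes explicit what the paper leaves implicit when it writes $\nabla_1=\nabla-\varphi_{e_1,x}$.
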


\begin{proof} We prove the theorem for the algebra $\mathcal{W}^+$, since for the algebra $\mathcal{W}_1$ the proof is similar.

Let $\nabla$ be a $2$-local $\frac{1}{2}$-derivation on $\mathcal{W}^+,$ such that $\nabla(e_1)=0.$
Then for any element $x=\sum\limits_{j\in\mathbb{N}}x_je_j\in \mathcal{W}^+,$ there exists a $\frac{1}{2}$-derivation $\varphi_{e_1,x}(x)$, such that
$$\nabla(e_1)=\varphi_{e_1,x}(e_1),\quad \nabla(x)=\varphi_{e_1,x}(x).$$

Hence,
$$0=\nabla(e_1)=\varphi_{e_1,x}(e_1)=\sum\limits_{j\geq 1}\alpha_je_{j},$$
which implies,  $\alpha_j=0,\ j\in\mathbb{N}.$

Consequently, from the description of the $\frac{1}{2}$-derivation $\mathcal{W}^+,$ we conclude that
$\varphi_{e_1,x}=0.$
Thus, we obtain that if $\nabla(e_1)=0,$ then
$
\nabla\equiv0.
$

Let now $\nabla$ be an arbitrary $2$-local $\frac{1}{2}$-derivation of \(\mathcal{W}^+\).
Take a $\frac{1}{2}$-derivation $\varphi_{e_1,x},$ such that
\begin{equation*}
\nabla(e_1)=\varphi_{e_1,x}(e_1)\ \ \text{and} \ \ \nabla(x)=\varphi_{e_1,x}(x).
\end{equation*}

Set $\nabla_1=\nabla-\varphi_{e_1,x}.$ Then $\nabla_1$ is a $2$-local
$\frac{1}{2}$-derivation, such that $\nabla_1(e_1)=0.$ Hence $\nabla_1(x)=0$ for all $x\in \mathcal{W}^+,$  which implies $\nabla=\varphi_{e_1,x}.$
Therefore, $\nabla$ is a
$\frac{1}{2}$-derivation.
\end{proof}

\subsection{2-local $\frac{1}{2}$-derivation on Lie algebra $\mathcal{W}(a,b)$}

Now we shall give the main result concerning $2$-local $\frac{1}{2}$-derivation of Lie algebra $\mathcal{W}(a,b)$.

\begin{theorem}
Any $2$-local $\frac{1}{2}$-derivation of the Lie algebra $\mathcal{W}(a,b)$ is a $\frac{1}{2}$-derivation.
\end{theorem}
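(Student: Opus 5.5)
The proof will follow the pattern of Theorem \ref{Witt}, with the case $b\neq -1$ disposed of first. By Theorem \ref{thm52}, for $b\neq-1$ every $\frac12$-derivation of $\mathcal{W}(a,b)$ is trivial, so each $\varphi_{x,y}$ is trivial. Hence $\nabla(x)=\varphi_{x,x}(x)$ is trivial on each element, and $\nabla$ coincides with the trivial $\frac12$-derivation. If "trivial" means multiples of the identity, take $\nabla(x)=\lambda_x x$. Evaluating $\nabla$ on the pair $(x,y)$ with $x,y$ linearly independent gives $\lambda_x=\lambda_{x,y}=\lambda_y$, so $\lambda$ is constant and $\nabla=\lambda\,\mathrm{id}$. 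The main work is therefore for $b=-1$.

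For $\mathcal{W}(a,-1)$, first show that if $\nabla(e_0)=0$ then $\nabla\equiv 0$. Fix any $x$ and take $\varphi=\varphi_{e_0,x}$ with $\varphi(e_0)=\nabla(e_0)=0$ and $\varphi(x)=\nabla(x)$. By Theorem \ref{thm52},
\[
\varphi(e_0)=\sum_t\alpha_t e_t+\sum_t\beta_t f_t .
\]
Since $\{e_t,f_t\}$ is a basis, all $\alpha_t$ and $\beta_t$ vanish. Because the formulas of Theorem \ref{thm52} express $\varphi(e_i)$ and $\varphi(f_i)$ entirely through $\{\alpha_t\}$ and $\{\beta_t\}$, it follows that $\varphi=0$, hence $\nabla(x)=0$. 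The key point is that $\varphi$ is completely determined by its value at the single element $e_0$. Concretely, the map $\varphi\mapsto\varphi(e_0)$ is injective on $\mathrm{Der}_{\frac12}(\mathcal{W}(a,-1))$, since the coefficient of $e_t$ recovers $\alpha_t$ and the coefficient of $f_t$ recovers $\beta_t$.

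For general $\nabla$, I will not subtract a $\varphi_{e_0,x}$ that depends on $x$; this is the delicate point. Instead I fix one $\frac12$-derivation $\psi=\varphi_{e_0,e_0}$ with $\psi(e_0)=\nabla(e_0)$. For any $x$, the derivation $\varphi_{e_0,x}$ also satisfies $\varphi_{e_0,x}(e_0)=\nabla(e_0)=\psi(e_0)$, so by injectivity $\varphi_{e_0,x}=\psi$. Hence $\nabla(x)=\psi(x)$ for all $x$. Equivalently, $\nabla-\psi$ is a $2$-local $\frac12$-derivation vanishing at $e_0$, since differences with a fixed $\frac12$-derivation preserve $2$-locality, and the previous step then gives $\nabla=\psi$.

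The main obstacle is verifying the injectivity claim from Theorem \ref{thm52}: that no nonzero $\frac12$-derivation of $\mathcal{W}(a,-1)$ (or of $\mathcal{W}(a,b)$ for $b\neq-1$) kills $e_0$. For $b=-1$ this is immediate from the explicit parametrization. For $b\neq-1$, if trivial means scalar multiples of the identity, then $\lambda e_0=0$ forces $\lambda=0$, and the same argument applies verbatim. With injectivity established, the rest is the formal argument above.
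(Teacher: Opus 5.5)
Your proposal is correct and is essentially the paper's argument: Theorem \ref{thm52} shows that a $\frac12$-derivation of $\mathcal{W}(a,-1)$ vanishing at $e_0$ is zero, so $\nabla$ must coincide with one fixed $\frac12$-derivation agreeing with it at $e_0$ (your $\psi=\varphi_{e_0,e_0}$ is what the paper's subtraction of $\varphi_{e_0,x}$ amounts to once $x$ is held fixed). Your argument $\lambda_x=\lambda_{x,y}=\lambda_y$ for $b\neq-1$ usefully fills in a step the paper only asserts.
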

\begin{proof} Let $\nabla$ be a $2$-local $\frac{1}{2}$-derivation on $\mathcal{W}(a,-1),$ such that $\nabla(e_0)=0.$
Then for any element $x=\sum\limits_{n\in\mathbb{Z}}x_ne_n+\sum\limits_{m\in\mathbb{Z}}y_mf_m\in \mathcal{W}(a,-1)$ there exists a $\frac{1}{2}$-derivation $\varphi_{e_0,x}(x)$, such that
$$\nabla(e_0)=\varphi_{e_0,x}(e_0),\quad \nabla(x)=\varphi_{e_0,x}(x).$$

Hence,
$$0=\nabla(e_0)=\varphi_{e_0,x}(e_0)=\sum\limits_{t\in\mathbb{Z}}\alpha_te_{t}+\sum\limits_{t\in\mathbb{Z}}\beta_tf_{t}
,$$
which implies,  $\alpha_t=\beta_t=0,\ t\in\mathbb{Z}.$

Consequently, from the description of the $\frac{1}{2}$-derivation $\mathcal W(a,-1),$ we conclude that
$\varphi_{e_0,x}=0.$
Thus, we obtain that if $\nabla(e_0)=0,$ then
$
\nabla\equiv0.
$

Let now $\nabla$ be an arbitrary $2$-local $\frac{1}{2}$-derivation of $\mathcal W(a,-1)$.
Take a $\frac{1}{2}$-derivation $\varphi_{e_0,x},$ such that
\begin{equation*}
\nabla(e_0)=\varphi_{e_0,x}(e_0)\ \ \text{and} \ \ \nabla(x)=\varphi_{e_0,x}(x).
\end{equation*}

Set $\nabla_1=\nabla-\varphi_{e_0,x}.$ Then $\nabla_1$ is a $2$-local
$\frac{1}{2}$-derivation, such that $\nabla_1(e_0)=0.$ Hence $\nabla_1(x)=0$ for all $x\in \mathcal W(a,-1),$  which implies $\nabla=\varphi_{e_0,x}.$
Therefore, $\nabla$ is a
$\frac{1}{2}$-derivation.

Let $\nabla$ be any 2-local $\frac12$-derivation on $\mathcal{W}(a,b)$ for $b \neq -1$. According to Theorem \ref{thm52}, the 2-local $\frac{1}{2}$-derivation is trivial. Therefore, $\nabla$ is a trivial
$\frac{1}{2}$-derivation.
\end{proof}

\subsection{2-local $\frac12$-derivation on thin Lie algebras}

\begin{theorem}
Let  $\mathcal L$ be the thin Lie algebra. Then $\mathcal{L}$ admits a 2-local $\frac12$-derivation which is not a
$\frac12$-derivation.
\end{theorem}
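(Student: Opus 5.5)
The plan is to produce an explicit map $\nabla$, necessarily non-linear, and check two things: that $\nabla$ is not a $\frac12$-derivation, and that for every pair $x,y$ some member of the family $D_{\alpha,\beta}$ of Lemma \ref{thin} matches $\nabla$ at both points. The linear operator $\Delta$ from the local case cannot be reused. For a pair $x,y$ with $x_1,y_1\neq0$, the free parameters $\alpha_2,\alpha_3,\dots$ enter $D_{\alpha,\beta}(x)$ and $D_{\alpha,\beta}(y)$ only through $x_1\sum_{i\ge2}\alpha_ie_i$ and $y_1\sum_{i\ge2}\alpha_ie_i$. A common choice of $\alpha$ therefore forces a rigid relation between the two values.

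So I will split $\mathcal L$ by whether $x_1=0$. Write $D_0=D_{\alpha,\beta}$ with $\alpha=(1,0,0,\dots)$ and $\beta=0$. Then $D_0(e_1)=e_1$, $D_0(e_2)=0$ and $D_0(e_j)=(1-2^{2-j})e_j$ for $j\ge3$, so $D_0$ agrees with $\Delta$ on every $x$ with $x_1=0$. The candidate is
\[
\nabla(x)=\begin{cases}\Delta(x),& x_1=0,\\ D_{\alpha(x),0}(x),& x_1\neq0,\end{cases}
\]
where $\alpha(x)$ has $\alpha_1=0$ and $\alpha_i$ ($i\ge2$) chosen as in Case 2 of the local construction. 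In that case only $e_1$ has a nonzero image, namely $D_{\alpha(x),0}(x)=x_1\sum_{i\ge2}\alpha_ie_i$, so we may arrange $D_{\alpha(x),0}(x)=\Delta(x)-\Delta(x_1e_1)$, which equals $\Delta(x)$ since $\Delta(e_1)=0$. Homogeneity $\nabla(\lambda x)=\lambda\nabla(x)$ is immediate. Non-triviality follows exactly as before: on the pair $e_1,e_3$ we have $\nabla(e_1)=0$, $\nabla(e_3)=\tfrac12e_3$ and $\nabla(e_4)=\tfrac34e_4$, which violates the $\frac12$-derivation identity.

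The verification then runs over three kinds of pairs. If $x_1=y_1=0$, take $\varphi=D_0$. If $x_1\neq0$ and $y_1=0$, take $\alpha_1=1$ and $\beta=0$, so that $\varphi$ agrees with $D_0$ on $y$, and tune $\alpha_2,\alpha_3,\dots$ to fix $\varphi(x)$. If $x_1,y_1\neq0$ and $x,y$ are non-proportional, the condition reduces to a linear system in the finitely many relevant parameters $\alpha_i,\beta_i$ for the two points; the $\beta$'s shift the support on $e_j$, $j\ge3$, with weights $2^{2-j}$. I expect this system to be solvable because $\alpha$ acts through $e_1$ and $\beta$ acts through $e_2$ independently.

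The main obstacle is the mixed case $x_1\neq0$, $y_1=0$. With $\alpha_1=1$ the value $\varphi(x)$ carries the term $x_1e_1$, which $\nabla(x)=\Delta(x)$ does not contain. To remove it I would first try compensating through $\beta_2$ instead of $\alpha_1$, using $\varphi(e_2)=\beta_2e_2$ and the coefficient $(1-2^{2-j})\alpha_1+2^{2-j}\beta_2$ on $e_j$. If that fails, I would redefine $\nabla$ on $\{x_1\neq0\}$ to be $x_1e_1+\Delta(x)$, possibly with a nonlinear correction in the $e_2$-component depending on $x_2/x_1$. That choice makes the mixed case work with $\alpha_1=1$, and the nonlinearity of the correction is what prevents $\nabla$ from being globally of the form $D_{\alpha,\beta}$. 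After the mixed case, the remaining work is checking that the same-side pairs still close up.
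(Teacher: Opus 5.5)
Your construction has a genuine gap: neither map you propose is a non-trivial 2-local $\frac12$-derivation.

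\textbf{The primary candidate is $\Delta$ itself.} It equals $\Delta$ on $\{x_1=0\}$ by definition, and on $\{x_1\neq0\}$ you arrange $D_{\alpha(x),0}(x)=\Delta(x)$. So $\nabla=\Delta$ is linear, contrary to your own remark that $\Delta$ cannot be reused, and $\Delta$ is not 2-local. To see this, identify $\operatorname{span}\{e_j:j\ge2\}$ with $\mathbb{F}[t]$ via $e_j\leftrightarrow t^{j-2}$. By Lemma \ref{thin}, a $\frac12$-derivation with $\alpha_1=0$ acts on this span by $Z(t)\mapsto B(t)Z(t/2)$, where $B(t)=\sum_{i\ge2}\beta_it^{i-2}$ is a polynomial. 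Take the mixed pair $(e_1,\,e_2+e_3)$. The requirement $\varphi(e_1)=\Delta(e_1)=0$ forces $\alpha=0$. Then $\varphi(e_2+e_3)=\tfrac12e_3$ reads $B(t)\bigl(1+\tfrac{t}{2}\bigr)=\tfrac{t}{2}$, which has no polynomial solution: coefficientwise $\beta_2=0$, $\beta_3=\tfrac12$, $\beta_{k+1}=-\tfrac12\beta_k$, and this never terminates. The pair $(e_1,\,e_1+e_2+e_3)$ fails for the same reason, so your ``expected solvable'' case $x_1,y_1\neq0$ fails too. The point is that $\beta$ does not act freely; it acts by multiplication by a polynomial.

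\textbf{The fallback does not repair this.} Since $x_1e_1+\Delta(x)=D_0(x)$, without a correction you get $\nabla=D_0$, an honest $\frac12$-derivation. Your check on $(e_1,e_3)$ also no longer applies, since now $\nabla(e_1)=e_1$. Now add a correction $\nabla(x)=D_0(x)+x_1g(x_2/x_1)e_2$ on $\{x_1\neq0\}$. Take $x=e_1+e_3$, $y=e_1+se_2$ and write $\varphi=D_0+\psi$.
\begin{itemize}
\item The $e_1$-component of $\psi(x)$ equals the parameter $\alpha_1$ of $\psi$. Since $\psi(x)=g(0)e_2$ has no $e_1$-component, $\alpha_1=0$.
\item Then $\psi(x-y)=\psi(e_3-se_2)$ corresponds to $B(t)\bigl(\tfrac{t}{2}-s\bigr)$, which must equal the constant $g(0)-g(s)$.
\item Hence $B=0$ and $g(s)=g(0)$ for all $s$.
\end{itemize}
So $g$ is constant and $\nabla=D_{(1,g(0),0,\dots),0}$ is again a $\frac12$-derivation. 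Comparing both points with $e_1+e_N$ for large $N$, the same argument forces any correction confined to the $e_2$-component to be linear.

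\textbf{The missing idea.} Use one fixed $\frac12$-derivation on the hyperplane $\{x_1=0\}$ and a different fixed one off it. Their difference should send every $x$ with $x_1\neq0$ into $\operatorname{span}\{e_k:k\ge2\}$. Then mixed pairs are handled by the maps $\varphi_1$ with $\varphi_1(e_i)=0$ for $i\ge2$ and $\varphi_1(e_1)$ arbitrary in that span. The paper takes $\nabla=0$ on $\{x_1=0\}$ and $\nabla=\varphi_2$ on $\{x_1\neq0\}$, where $\varphi_2(e_1)=0$ and $\varphi_2(e_i)=2^{2-i}e_i$ for $i\ge2$. Non-additivity follows from $\nabla(e_1+e_2)+\nabla(-e_1+e_2)=2e_2\neq0=\nabla(2e_2)$. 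Your split by $x_1$ was the right one: $D_0$ on the hyperplane and $D_0+\varphi_2$ off it would have worked.
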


\begin{proof}
For $x=\sum\limits_{i=1}^nx_ie_i\in \mathcal{L},\ n\in\mathbb{N}$ set
\begin{equation*}\label{yb4}
\nabla(x) = \begin{cases}
0, & \text{if $x_1=0$,}\\
\sum\limits_{i=2}^n2^{2-i}x_ie_i, & \text{if $x_1\neq0$}.
\end{cases}
\end{equation*}

We shall show that  $\nabla$ is a 2-local $\frac12$-derivation of
\(\mathcal{L}\), which is not a $\frac12$-derivation.

Firstly, we show that $\nabla$ is not a $\frac12$-derivation. Take the
elements  $x=e_1+e_2$ and $y=-e_1+e_2.$ We have
\[
\nabla(x+y)=\nabla(2e_2)=0\] and
\[
\nabla(x)+\nabla(y)=\nabla(e_1+e_2)+\nabla(-e_1+e_2)=e_2.\] Thus
\[
\nabla(x+y)\neq\nabla(x)+\nabla(y).\] So, \(\nabla\) is not
additive, and therefore it is not a $\frac12$-derivation.

Let us consider the linear maps $\varphi_1$ and $\varphi_2$ on \(\mathcal{L}\)
defined as:
\begin{equation}\label{yb5}
\varphi_1(e_i) = \begin{cases}
\sum\limits_{k=2}^n\alpha_ke_k, & \text{if $i=1$,}\\
0, & \text{if $i\geq2$},
\end{cases}
\end{equation}
where $\alpha_k\in \mathbb{F},$ $k=2, \ldots, n,$ and $n\in \mathbb{N}$ and
\begin{equation}\label{yb6}
\varphi_2(e_i) = \begin{cases}
0, & \text{if $i=1$,}\\
2^{2-i}e_i, & \text{if $i\geq2$}.
\end{cases}
\end{equation}
By Lemma \ref{thin}, it follows that both $\varphi_1$ and $\varphi_2$ are $\frac12$-derivations
of \(\mathcal{L}.\)

For any $x=\sum\limits_{k=1}^{n_x}  x_ke_k, \,
y=\sum\limits_{k=1}^{n_y}  y_ke_k \in \mathcal{L}$ we need to find
a $\frac12$-derivation $\varphi=\varphi_{x,y}$ such that
\begin{equation*}
\nabla(x)=\varphi(x)\ \ \text{and} \ \ \nabla(y)=\varphi(y).
\end{equation*}

It suffices to consider the following three cases.

\textbf{Case 1.} Let $x_1=y_1=0.$ In this case, we take $\varphi\equiv
0,$ because $\nabla(x)=\nabla(y)=0.$

\textbf{Case 2.} Let $x_1=0,\ y_1\neq0.$  In this case we take the
$\frac12$-derivation $\varphi_1$ of the form \eqref{yb5} with \(\displaystyle \alpha_1=0,
\alpha_k=\frac{2^{2-k}y_k}{y_1},\) \(2\leq k\leq n_y.\) Then
$$\nabla(x)=0=\varphi_1(x)$$ and
$$\nabla(y)=\sum\limits_{k=2}^{n_y} 2^{2-k}y_ke_k=y_1\sum\limits_{k=2}^{n_y}\frac{2^{2-k}y_k}{y_1}e_k=\varphi_1(y).$$
So, $\varphi$ is a $\frac12$-derivation such that $\nabla(x)=\varphi(x),\ \nabla(y)=\varphi(y).$

\textbf{Case 3.} Let $x_1\neq0,\ y_1\neq0.$ In this case we take the $\frac12$-derivation $\varphi_2$ of the form~\eqref{yb6}. Then
\[\nabla(x)=\sum\limits_{k=2}^{n_x}2^{2-k}x_ke_k=\varphi_2(x)\]
and
\[\nabla(y)=\sum\limits_{k=2}^{n_y}2^{2-k}y_ke_k=\varphi_2(y).\]

Therefore in all cases we constructed a derivation of  \(\mathcal{L}\) such that $\nabla(x)=\varphi(x),\ \nabla(y)=\varphi(y),$ i.e. $\nabla$ is a 2-local $\frac12$-derivation which is not a $\frac12$-derivation.
The proof is complete.
\end{proof}

\section{Local and 2-local $\frac{1}2$-derivations of the solvable Lie algebra with abelian nilpotent radical}

In this section, we study local and 2-local $\frac{1}2$-derivations of a solvable infinite-dimensional Lie algebra with an abelian radical of codimension 1.

Let us consider the following (see \cite{BKL}) so-called  the solvable Lie algebra with abelian nilpotent radical of codimension 1 $\mathfrak{g}$
with a basis \(\{e_i: i\in \mathbb{N}\}\), which is defined by the
following table of multiplications of the basis elements:
 $$[e_1,e_i]=e_i,\quad i\geq 2.$$
 and other products of the basis elements being zero.

   In  \cite{BKL}, the general structure of $\frac{1}2$-derivations of a solvable infinite-dimensional Lie algebra with an abelian radical of codimension 1 is described as follows.

\begin{lemma}\label{locsollem}
    Let $\varphi$ be a $\frac12$-derivation on $\mathfrak{g}$. Then
    $$\varphi(e_1)=\sum\limits_{i\in\mathbb{N}}\alpha_ie_i,\quad \varphi(e_i)=\alpha_1 e_i.$$
\end{lemma}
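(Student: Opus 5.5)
We prove Lemma \ref{locsollem} by direct computation with the $\frac12$-derivation identity on pairs of basis elements, in the spirit of the proof of Lemma \ref{thin}. Write
\[
\varphi(e_1)=\sum_{k}\alpha_k e_k,\qquad \varphi(e_i)=\sum_k \gamma_{i,k}e_k \quad (i\ge 2),
\]
where all sums are finite. The products we need are $[e_1,e_k]=e_k$ for $k\ge2$, $[e_1,e_1]=0$, and $[e_j,e_k]=0$ for $j,k\ge2$. Hence $[e_1,y]=y-y_1e_1$, where $y_1$ denotes the $e_1$-coordinate of $y$.

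\textbf{Step 1: the pair $(e_1,e_i)$ with $i\ge2$.} The identity reads
\[
2\varphi(e_i)=[\varphi(e_1),e_i]+[e_1,\varphi(e_i)].
\]
Since $[e_k,e_i]=0$ for $k\ge2$, we have $[\varphi(e_1),e_i]=\alpha_1e_i$. Also $[e_1,\varphi(e_i)]=\varphi(e_i)-\gamma_{i,1}e_1$. Substituting gives
\[
\varphi(e_i)=\alpha_1e_i-\gamma_{i,1}e_1.
\]
Comparing $e_1$-coefficients yields $\gamma_{i,1}=-\gamma_{i,1}$, so $\gamma_{i,1}=0$, which we can divide by because the characteristic is zero. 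Therefore $\varphi(e_i)=\alpha_1e_i$.

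\textbf{Step 2: the pairs $(e_i,e_j)$ with $i,j\ge2$.} Here the identity reads
\[
0=[\alpha_1e_i,e_j]+[e_i,\alpha_1e_j],
\]
and the right-hand side is automatically zero, so these pairs impose no condition. The pair $(e_1,e_1)$ is trivial.

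\textbf{Step 3: no constraint on $\varphi(e_1)$.} As $\varphi(e_1)$ never appears except through its $e_1$-coefficient, the vector $\varphi(e_1)=\sum\alpha_ie_i$ is arbitrary (finitely supported). We may also check the converse, namely that every such map is a $\frac12$-derivation, by rerunning Step 1.

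The only point needing care, and so the main obstacle, is extracting the $e_1$-component in Step 1; the rest is immediate.
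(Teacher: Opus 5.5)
Your proof is correct and complete. The pair $(e_1,e_i)$, $i\ge2$, gives $\varphi(e_i)=\alpha_1e_i-\gamma_{i,1}e_1$, and the $e_1$-coefficient then forces $\gamma_{i,1}=0$. The remaining basis pairs impose no conditions. You only use $2\neq0$, which is automatic since $\frac12\in\mathbb{F}$, so characteristic zero is more than you need.

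The paper gives no proof of this lemma; it quotes the result from \cite{BKL}. Your direct basis computation, in the same style as the paper's proof of Lemma \ref{thin}, therefore supplies a self-contained argument. Your converse check in Step 3 also justifies the paper's later, unproved assertion that every $D_{\alpha}$ is a $\frac12$-derivation, which is used in Theorem \ref{locsolv}.
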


Now we shall give the main result concerning 2-local $\frac{1}{2}$-derivation of solvable Lie algebra with abelian nilpotent nilradical.

\begin{theorem}\label{solv}
Any $2$-local $\frac{1}{2}$-derivation of solvable Lie algebra with abelian nilpotent nilradical $\mathfrak{g}$ is a $\frac{1}{2}$-derivation.
\end{theorem}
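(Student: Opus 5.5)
The plan is to use that a $\frac12$-derivation of $\mathfrak{g}$ is completely determined by its value on $e_1$, and then repeat the reduction used for Theorem \ref{Witt}. By Lemma \ref{locsollem}, every $\frac12$-derivation $\varphi$ of $\mathfrak{g}$ has the form $\varphi(e_1)=\alpha_1e_1+v$ with $v=\sum_{i\geq2}\alpha_ie_i$ (a finite sum), and $\varphi(e_i)=\alpha_1e_i$ for $i\geq2$. So for $x=\sum x_ie_i$,
\[
\varphi(x)=\alpha_1x+x_1v .
\]
In particular, $\varphi(e_1)$ determines both $\alpha_1$ (its $e_1$-coordinate) and $v$ (the remaining coordinates). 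Hence $\varphi(e_1)=0$ forces $\varphi\equiv0$.

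\textbf{Step 1 (normalisation).} Let $\nabla$ be a $2$-local $\frac12$-derivation. Choose any $\frac12$-derivation $\psi$ with $\nabla(e_1)=\psi(e_1)$; for instance $\psi=\varphi_{e_1,e_1}$. Set $\nabla_1=\nabla-\psi$. We check that $\nabla_1$ is again $2$-local: for $x,y\in\mathfrak g$, the map $\varphi_{x,y}-\psi$ is a $\frac12$-derivation, since $\operatorname{Der}_{\frac12}(\mathfrak g)$ is a linear space. It agrees with $\nabla_1$ at $x$ and at $y$. Moreover $\nabla_1(e_1)=0$.

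\textbf{Step 2 (vanishing).} Take an arbitrary $x\in\mathfrak g$ and a $\frac12$-derivation $\varphi=\varphi_{e_1,x}$ with $\nabla_1(e_1)=\varphi(e_1)$ and $\nabla_1(x)=\varphi(x)$. Then
\[
0=\varphi(e_1)=\alpha_1e_1+\sum_{i\geq2}\alpha_ie_i ,
\]
so all $\alpha_i=0$. By the formula above $\varphi\equiv0$, hence $\nabla_1(x)=0$. Since $x$ was arbitrary, $\nabla_1\equiv0$, and $\nabla=\psi$ is a $\frac12$-derivation.

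There is no serious obstacle. The only points needing care are that $\nabla-\psi$ remains $2$-local (linearity of the space of $\frac12$-derivations) and that Lemma \ref{locsollem} indeed forces $\varphi(e_i)$ for $i\geq2$ to be governed by the same $\alpha_1$ appearing in $\varphi(e_1)$. This is exactly what makes a single value $\varphi(e_1)$ determine $\varphi$, unlike the thin algebra, where $\varphi(e_2)$ carries independent parameters.
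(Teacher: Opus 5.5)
Your proof is correct and follows the same route as the paper: Lemma \ref{locsollem} shows that a $\frac12$-derivation vanishing at $e_1$ is zero, and you then subtract a $\frac12$-derivation that agrees with $\nabla$ at $e_1$. Your version is slightly cleaner than the paper's. You fix a single $\psi=\varphi_{e_1,e_1}$ and explicitly check that $\nabla-\psi$ is still $2$-local, whereas the paper subtracts the $x$-dependent $\varphi_{e_1,x}$ and writes $\nabla(e_0)=0$ where it means $\nabla(e_1)=0$.
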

\begin{proof} Let $\nabla$ be a $2$-local $\frac{1}{2}$-derivation on $\mathfrak{g},$ such that $\nabla(e_0)=0.$
Then for any element $x=\sum\limits_{j\in\mathbb{N}}x_je_j\in \mathfrak{g},$ there exists a $\frac{1}{2}$-derivation $\varphi_{e_1,x}(x)$, such that
$$\nabla(e_1)=\varphi_{e_1,x}(e_1),\quad \nabla(x)=\varphi_{e_1,x}(x).$$

Hence,
$$0=\nabla(e_1)=\varphi_{e_1,x}(e_1)=\sum\limits_{j\in\mathbb{N}}\alpha_je_{j},$$
which implies,  $\alpha_j=0,\ j\in\mathbb{N}.$

Consequently, from the description of the $\frac{1}{2}$-derivation $\mathfrak{g},$ we conclude that
$\varphi_{e_1,x}=0.$
Thus, we obtain that if $\nabla(e_1)=0,$ then
$
\nabla\equiv0.
$

Let now $\nabla$ be an arbitrary $2$-local $\frac{1}{2}$-derivation of \(\mathfrak{g}\).
Take a $\frac{1}{2}$-derivation $\varphi_{e_1,x},$ such that
\begin{equation*}
\nabla(e_1)=\varphi_{e_1,x}(e_1)\ \ \text{and} \ \ \nabla(x)=\varphi_{e_1,x}(x).
\end{equation*}

Set $\nabla_1=\nabla-\varphi_{e_1,x}.$ Then $\nabla_1$ is a $2$-local
$\frac{1}{2}$-derivation, such that $\nabla_1(e_1)=0.$ Hence $\nabla_1(x)=0$ for all $x\in \mathfrak{g},$  which implies $\nabla=\varphi_{e_1,x}.$
Therefore, $\nabla$ is a
$\frac{1}{2}$-derivation.
\end{proof}

Let
$\alpha=(\alpha_1,\alpha_2,\dots)$ be an arbitrary sequence from the field $\mathbb{F}$.
Define the operator
\[
D_{\alpha}:\mathfrak g \to \mathfrak g
\]
which is a $\frac12$-derivation of $\mathfrak g$, given on the basis elements by
\[
D_{\alpha}(e_1)=\sum_{i\in\mathbb N}\alpha_i e_i,
\qquad
D_{\alpha}(e_k)=\alpha_1 e_k,\quad k\ge2.
\]

We also define the operator $\overline{\Delta}:\mathfrak g\to\mathfrak g$ by
\[
\overline{\Delta}(e_1)=0,
\qquad
\overline{\Delta}(e_k)=e_k,\quad k\ge2.
\]

\begin{theorem}\label{locsolv}
For arbitrary local $\frac{1}{2}$-derivation $\Delta$ of the solvable Lie algebra with abelian nilpotent nilradical $\mathfrak{g}$ there exist a sequence $\alpha$ and a scalar $a\in\mathbb F$ such that
\[
\Delta = D_{\alpha} + a\,\overline{\Delta}.
\]

\end{theorem}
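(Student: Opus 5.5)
Throughout, elements of $\mathfrak g$ are finite linear combinations of the $e_i$, and by Lemma \ref{locsollem} every $\frac12$-derivation of $\mathfrak g$ has the form $D_\alpha$ for some sequence $\alpha$. The idea is to subtract from $\Delta$ the $\frac12$-derivation that agrees with it at $e_1$, and then identify what remains as a multiple of $\overline{\Delta}$.

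\emph{Step 1: reduction to $\Delta(e_1)=0$.} Choose $\varphi=D_\alpha$ with $\Delta(e_1)=D_\alpha(e_1)=\sum_i\alpha_ie_i$. Then $\Delta_1=\Delta-D_\alpha$ is a local $\frac12$-derivation with $\Delta_1(e_1)=0$. It suffices to show $\Delta_1=a\overline{\Delta}$ for some scalar $a$; this gives $\Delta=D_\alpha+a\overline\Delta$.

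\emph{Step 2: $\Delta_1$ is diagonal on $e_k$, $k\ge 2$.} For each $k\ge2$ there is $\gamma=\gamma(k)$ with $\Delta_1(e_k)=D_\gamma(e_k)=\gamma_1e_k$. Write $\Delta_1(e_k)=c_ke_k$.

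\emph{Step 3: all $c_k$ coincide.} Fix $k\neq l$ with $k,l\ge2$ and apply locality to $x=e_k+e_l$. Since $x$ has no $e_1$-component, $D_\gamma(x)=\gamma_1x$ for the corresponding $\gamma$. Linearity then gives
\[
c_ke_k+c_le_l=\Delta_1(x)=\gamma_1(e_k+e_l),
\]
so $c_k=c_l=\gamma_1$. Hence there is a single scalar $a$ with $c_k=a$ for all $k\ge2$. Together with $\Delta_1(e_1)=0$, this yields $\Delta_1=a\overline\Delta$, as required.

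The only delicate point is Step 3, which relies on linearity of $\Delta_1$ together with the fact that every $\frac12$-derivation acts as a scalar on the radical $\mathrm{span}\{e_k:k\ge2\}$. The statement claims only the form $D_\alpha+a\overline\Delta$, not that every such map is local, so no converse is needed here.
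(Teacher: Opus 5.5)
Your proof is correct and follows the paper's argument: you subtract the $\frac12$-derivation $D_\alpha$ that agrees with $\Delta$ at $e_1$, and you use Lemma \ref{locsollem} to see that each $e_k$, $k\ge 2$, is an eigenvector of $\Delta_1$. Locality applied to a sum of two radical basis vectors then forces a common eigenvalue; the paper uses $e_i+e_2$ where you use arbitrary $e_k+e_l$. The paper also checks that $\overline{\Delta}$ is itself a local $\frac12$-derivation, which shows the description is attained, but as you note the statement as formulated does not require this.
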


\begin{proof}
Since $\Delta$ is a local $\frac12$-derivation, for every
$x\in\mathfrak g$ there exists a sequence $\alpha(x)=\left(\alpha_1(x), \alpha_2(x), \cdots\right)$ such that
\[
\Delta(x)=D_{\alpha(x)}(x).
\]

First, consider $\Delta(e_1)=D_{\alpha(e_1)}(e_1)$ and define the operator
\[
\Delta_1(x)=\Delta(x)-D_{\alpha(e_1)}(x).
\]
Clearly,
\[
\Delta_1(e_1)=0.
\]

Now let $i\ge3$.
By the locality condition applied to the element $e_i+e_2$, we obtain
\[
\Delta_1(e_i+e_2)=\lambda_{e_i+e_2}(e_i+e_2).
\]
On the other hand,
\[
\Delta_1(e_i+e_2)=\Delta_1(e_i)+\Delta_1(e_2)
=\lambda_i e_i+\lambda_2 e_2.
\]
This implies $\lambda_i=\lambda_2=\lambda$.
Hence,
\[
\Delta_1(e_k)=\lambda e_k,\qquad k\ge2.
\]
Therefore, $\Delta_1=\lambda\overline{\Delta}$ and $a=\lambda$, which yields
\[
\Delta = D_{\alpha} + a\,\overline{\Delta}.
\]

It remains to show that $\overline{\Delta}$ is a local $\frac12$-derivation.
Let $x=\sum\limits_{i\in I}x_i e_i$ be an arbitrary element of $\mathfrak g$,
where $x_i\neq0$ only for $i\in I\subset\mathbb{N}$.

\medskip
\noindent
\textbf{Case 1.} If $1\notin I$, i.e. $x_1=0$ then
\[
\overline{\Delta}(x)= x = D_{(1,0,0,\dots)}(x).
\]

\medskip
\noindent
\textbf{Case 2.} If $1\in I$, $x_1\neq0$ define
\[
\alpha(x)=\left(0,\frac{x_2}{x_1},
\frac{x_3}{x_1},\dots\right).
\]
Then
\[
\overline{\Delta}(x)=\sum_{k\ge2, k\in I}x_k e_k=D_{\alpha(x)}(x).
\]

Thus, $\overline{\Delta}$ is a local $\frac12$-derivation.

\end{proof}

\textbf{My manuscript has no associated data}

\textbf{Ethical approval} Not applicable

\textbf{Conflict of interest} The authors declare no competing interests.

\textbf{Author Contributions:}All authors wrote, read, and approved the published version of the
manuscript

\end{document}